\newtheorem{lem}{Lemma}[section]
\newtheorem{defn}{Definition}[section]
\newtheorem{thm}[lem]{Theorem}
\newtheorem{rem}[lem]{Remark}
\newtheorem{corol}[lem]{Corollary}
\newtheorem{asm}[lem]{Assumption}
\numberwithin{equation}{section}
\newcommand{\mri}{{\mathrm{i}}} 
\title{\textbf{Time-harmonic diffuse optical tomography: H\"older stability of the derivatives of the optical properties of a medium at the boundary}}
\author{\textsc{Jason Curran\thanks{Department of Mathematics and Statistics, CONFIRM-Science Foundation Ireland, University of Limerick, Ireland, Jason.Curran$@$ul.ie},\quad Romina
Gaburro}\thanks{Department of Mathematics and Statistics, CONFIRM-Science Foundation Ireland, Health Research Institute (HRI),
University of Limerick, Ireland,
Romina.Gaburro$@$ul.ie},\\ \textsc{Clifford J. Nolan\thanks{Department of Mathematics and Statistics, CONFIRM-Science Foundation Ireland, Health Research Institute (HRI), University of Limerick, Ireland, Clifford.Nolan$@$ul.ie},\quad and \quad Erkki Somersalo\thanks{Department of Mathematics, Applied Mathematics and Statistics,
Case Western Reserve University, Cleveland, OH 44106-7058, U.S., erkki.somersalo$@$case.edu}}}
\date{}
\begin{document}
\maketitle

 \begin{abstract}
We study the inverse problem in Optical Tomography of determining the optical properties of a medium $\Omega\subset\mathbb{R}^n$, with $n\geq 3$, under the so-called \textit{diffusion approximation}. We consider the time-harmonic case where $\Omega$ is probed with an input field that is modulated with a fixed harmonic frequency $\omega=\frac{k}{c}$, where $c$ is the speed of light and $k$ is the wave number. Under suitable conditions that include a range of variability for $k$, we prove a result of H\"older stability of the derivatives of the \textit{absorption coefficient} $\mu_a$ of any order at the boundary $\partial\Omega$ in terms of the measurements, in the case when the \textit{scattering coefficient} $\mu_s$ is assumed to be known. The stability estimates rely on the construction of singular solutions of the underlying forward elliptic system, which extend results obtained in  J. Differential Equations 84 (2): 252-272 for the single elliptic equation.
\end{abstract}
\vskip 0.3 cm 
\noindent \textbf{Keywords:} Diffuse optical tomography, anisotropy, stability.


\section{Introduction}\label{section introduction}
\setcounter{equation}{0}

In this paper we address the problem in diffuse Optical Tomography (OT) of determining the optical properties of a medium when light is radiated through the surface of the medium. Although Maxwell's equations provide a complete model for the light propagation in a scattering medium on a micro scale, on the scale suitable for medical diffuse OT an appropriate model is given by the \textit{radiative transfer equation} (or \textit{Boltzmann equation})\cite{ArSc}. If $\Omega$ is a domain in $\mathbb{R}^{n}$, with $n\geq 2$ with smooth boundary $\partial\Omega$ and radiation is considered in the body $\Omega$, then it is well known that if the input field is modulated with a fixed harmonic frequency $\omega$, the so-called \textit{diffusion approximation} leads to the complex partial differential equation (see \cite{Ar}) for the energy current density $u$,

\begin{equation}\label{K,q equation}
-\mbox{div}\left(K\nabla u\right)+(\mu_a -\mri k)u=0,\qquad\textnormal{in}\quad\Omega.
\end{equation}

Here $k=\frac{\omega}{c}$ is the wave number, $c$ is the speed of light and, in the anisotropic case, the so-called \textit{diffusion tensor} $K$, is the complex matrix-valued function

\begin{equation}\label{K time-harmonic 1}
K=\frac{1}{n}\Big((\mu_a-\mri k) I+(I-B)\mu_s\Big)^{-1},\qquad\textnormal{in}\quad\Omega,
\end{equation}

where $B_{ij}=B_{ji}$ is a real matrix-valued function, $I$ is the $n\times n$ identity matrix and $I-B$ is positive definite (\cite{Ar}, \cite{HS}, \cite{H}) on $\Omega$. 
The diffusion equation (\ref{K,q equation}) is obtained by projecting the Boltzmann equation to the subspace of first order spherical harmonics ($P_1$ approximation), and the matrix $B$ is the contribution of the scattering phase function. The isotropic case when $B$ and hence $K$ is a scalar multiple of the identity is obtained if the scattering phase function depends only on the angle between the incident and scattering direction (\cite{HAS}, \cite{KS}). 
The spatially dependent real-valued coefficients $\mu_a$ and $\mu_s$ are called the \textit{absorption} and the \textit{scattering coefficients} of the medium $\Omega$ respectively and represent the optical properties of $\Omega$. It is worth noticing that many tissues including parts of the brain, muscle and breast tissue have fibrous structure on a microscopic scale which results in anisotropic physical properties on a larger scale. Therefore, the model considered in this manuscript is appropriate for the case of medical applications of OT. Although it is common practise in OT to use the Robin-to-Robin map to describe the boundary measurements (see \cite{Ar}), the Dirichlet-to-Neumann (D-N) map will be employed here instead. This is justified by the fact that in OT, prescribing its inverse, the Neumnann-to-Dirichlet (N-D) map (on the appropriate spaces), is equivalent to prescribing the Robin-to-Robin boundary map. A rigorous definition of the D-N map for equation \eqref{K,q equation} will be given in section \ref{formulation problem}.



It is also well known that in the static case, where $k=0$ in \eqref{K,q equation}, \eqref{K time-harmonic 1}, prescribing the N-D map is insufficient to recover both coefficients $\mu_a$ and $\mu_s$ uniquely \cite{ArL} unless \textit{a-priori} smoothness assumptions are imposed \cite{Ha}. See also \cite{Ha2} for a further discussion on the simultaneous unique determination, in the isotropic static case, of both optical coefficients in the case when such coefficients are piecewise analytic. The static anisotropic case, for which \eqref{K,q equation} is a single real elliptic equation, was studied in \cite{G}, where the author proved Lipschitz stability of $\mu_a$ and H\"older stability of the derivatives  of $\mu_a$ at the boundary in terms of $\Lambda_{K,\mu_a}$, in the case when $\mu_s$ is assumed to be known. In the time-harmonic case, where the medium s probed with an input field which is modulated with a fixed harmonic frequency $\omega=\frac{k}{c}$, with $k\neq 0$, the forward model \eqref{K,q equation} is a complex elliptic equation.  A result of Lipschitz stability of the boundary values of $\mu_a$ in terms of the D-N map, when $\mu_s$ is again assumed known, was established in the time-harmonic anisotropic case by some of the authors in \cite{DoGLN}.

In this paper, we consider the anisotropic time-harmonic case and extend the result in \cite{DoGLN}, by stably determining the derivatives of the absorption coefficient $\mu_a$, $D^{h}\mu_a$, for any $h\geq 1$, at the boundary of an anisotropic medium $\Omega\subset\mathbb{R}^n$, $n\geq 3$, whose scattering coefficient $\mu_s$ is assumed to be known. More precisely, we show that, under suitable conditions, $D^{h}\mu_a$ at the boundary $\partial\Omega$, depends upon the D-N map of \eqref{K,q equation}, $\Lambda_{K, \mu_a}$, with a modulus of continuity of H\"older type, if $k$ is chosen in certain intervals that depend on \textit{a-priori} bounds on $\mu_a$, $\mu_s$ and on the ellipticity constant of $I-B$ (Theorem \ref{main result}). The intervals of variability for $k$ were determined in \cite{DoGLN}. 

The case where $\mu_a$ is assumed to be known and the scattering coefficient $\mu_s$ is to be determined, can be treated in a similar manner. The choice in this paper, as well as in \cite{DoGLN}, of focusing on the determination of $\mu_a$ rather than the one of $\mu_s$ is driven by the medical application of OT we have in mind. While $\mu_s$ varies from tissue to tissue, it is the absorption coefficient $\mu_a$ that carries the more interesting physiological information as it is related to the global concentrations of certain metabolites in their oxygenated and deoxygenated states \cite{Boas}.

Our main result (Theorem \ref{main result}) is based on the construction of singular solutions to the complex elliptic equation \eqref{K,q equation}, having an isolated singularity outside $\Omega$. Such solutions were first constructed in \cite{A1} for the equation

\begin{equation}\label{Calderon operator}
\mbox{div}(K\nabla u ) = 0,\qquad\textnormal{in}\quad\Omega,
\end{equation}

when $K$ is a real matrix-valued function belonging to $W^{1,p}(\Omega)$, with $p>n$. Such solutions have been employed to prove stability estimates at the boundary in \cite{A1}, \cite{AG}, \cite{AG1} and \cite{GL} in the case of Calder\'on's problem (see \cite{C}) with global, local data and on manifolds (see also  \cite{Sa} and \cite{I1}). We also recall the seminal papers \cite{A1}, \cite{Koh-V1}, \cite{Koh-V2}, \cite{N} and \cite{Sy-U} of this extensively studied companion inverse problem, together with the review papers \cite{Bo} and \cite{U}.

In this paper we extend the construction of the singular solutions of \cite{A1} to the case of elliptic equations of type \eqref{K,q equation} with complex coefficients. Such a construction is done by treating \eqref{K,q equation} as a strongly elliptic system with real coefficients, since $\Re K\geq\tilde\lambda^{-1}I>0$, where  $\tilde\lambda$ is a positive constant depending on the \textit{a-priori} information on $\mu_s$, $B$ and $\mu_a$. In \cite{DoGLN}, the authors extended the construction of singular solutions to the complex equation \eqref{K,q equation} having an isolated singularity of Green's type only. This was enough to prove the Lipschitz continuity of the boundary values of $\mu_a$ in terms of the D-N map. Here singular solutions with an isolated singularity of arbitrary high order for elliptic complex partial differential equations are constructed and employed to prove our main result of H\"older stability of the derivatives (of any order) of $\mu_a$ at the boundary in terms of the D-N map, therefore further extending the original results of \cite{A1}. 

Our result also provides a first step towards a reconstruction procedure of $\mu_a$ by boundary measurements based on a Landweber iterative method for non-linear problems studied in \cite{dHQS}, where the authors provided an analysis of the convergence of such algorithm in terms of either a H\"older or Lipschitz global stability estimates (see also \cite{A-dH-F-G-S}, \cite{B-dH-Q-S}, \cite{F-A-Ba-dH-G-S}, \cite{Fa-dH-S}). We recall the important results of \cite{A-V} and \cite{Be-Fr} of global stability estimates for Calder\'on's inverse conductivity problem in the case of real and complex isotropic conductivities, respectively, and refer to the subsequent papers \cite{A-dH-G-S}, \cite{A-dH-G-S1}, \cite{A-dH-G-S2}, \cite{Be-dH-F-S}, \cite{Be-Fr-V}, \cite{Be-dH-Fr-V-Z}, \cite{Be-dH-Q}, \cite{Be-Fr-Mo-Ro-Ve}, \cite{FGS}, \cite{G-S} for an overview of the issue of stability estimates in related inverse problems. We also refer to \cite{HebdenArridge}, \cite{KVKaAr} for further reconstruction techniques of the optical properties of a medium and to \cite{GiHA} for a topical review on diffuse OT.

The paper is organized as follows. In Section \ref{formulation problem} we rigorously formulate the problem, state the main result (Theorem \ref{main result}) of H\"older stability of the derivatives of $\mu_a$ on $\partial\Omega$ and recall a previous result of Lipschitz stability of $\mu_a$ on $\partial\Omega$ (Theorem \ref{stability of mu}) for the sake of completeness. Section \ref{section singular solutions bis} is devoted to the construction of singular solutions for the complex partial differential equation \ref{K,q equation} on a ball, having an isolated singularity (of any order) at the centre of the ball. In Section \ref{proofs main result} we give the proof Theorem \ref{main result}.


\section{Formulation of the problem}\label{formulation problem}

\subsection{Main assumptions}\label{main assumptions}
We rigorously formulate the problem by introducing the following notation, definitions and assumptions.
For $n\geq 3$, a point $x\in \mathbb{R}^n$ will be denoted by $x=(x',x_n)$, where $x'\in\mathbb{R}^{n-1}$ and $x_n\in\mathbb{R}$.
Moreover, given a point $x\in \mathbb{R}^n$, we will denote with $B_r(x), B_r'(x')$ the open balls in
$\mathbb{R}^{n},\mathbb{R}^{n-1}$, centred at $x$ and $x'$ respectively with radius $r$
and by $Q_r(x)$ the cylinder

\[Q_r(x)=B_r'(x')\times(x_n-r,x_n+r).\]

We will also denote $B_r=B_r(0)$, $B'_r=B'_r(0)$ and $Q_r=Q_r(0)$.



\begin{defn}\label{def boundary}
Let $\Omega$ be a bounded domain in $\mathbb R^n$, with $n\geq 3$. We shall say that the boundary of $\Omega$, $\partial\Omega$, is of Lipschitz class with constants $r_0,L>0$, if for any $P\in\partial\Omega$ there exists a rigid
transformation of coordinates under which we have $P=0$ and

$$\Omega\cap Q_{r_0}=\{(x',x_n)\in Q_{r_0}\: |\,x_n>\varphi(x')\},$$

where $\varphi$ is a Lipschitz function on $B'_{r_0}$ satisfying

\[\varphi(0)=0\]

and

\[\|\varphi\|_{C^{0,1}(B'_{r_0})}\leq Lr_0.\]

\end{defn}

We consider, for a fixed $k>0$,

\begin{equation}\label{L complex}
L= - \mbox{div}\left(K\nabla\cdot\right) + q,\qquad\textnormal{in}\quad\Omega,
\end{equation}

where $K$ is the complex matrix-valued function

\begin{equation}\label{K time-harmonic 2}
K(x)=\frac{1}{n}\Big((\mu_a(x)-\mri k) I+(I-B(x))\mu_s(x)\Big)^{-1},\qquad\textnormal{for\:any}\:x\in\Omega,
\end{equation}

and $q$  is the complex-valued function 

\begin{equation}\label{q}
q=\mu_a - ik\qquad\textnormal{in}\quad\Omega.
\end{equation}

Here $I$ denotes the $n\times n$ identity matrix, where the matrix $B$ is given by the OT physical experiment and it is such that $B\in L^{\infty}(\Omega, Sym_n)$, where $Sym_n$ denotes the class of $n\times n$ real-valued symmetric matrices and such that $I-B$ is a positive definite matrix (\cite{Ar}, \cite{HS}, \cite{HAS}, \cite{H}). In this paper we assume that the \textit{scattering coefficient} $\mu_s$ is also known in $\bar\Omega$ and it is the derivatives of the \textit{absorption coefficient} $\mu_a$ on $\partial\Omega$ that we seek to estimate from boundary measurements.\\

We assume that there are positive constants $\lambda$, $E$ and $\mathcal{E}$ and $p>n$ such that the known quantities $B \in L^{\infty}(\Omega, Sym_n)$, 
$\mu_s \in L^{\infty}(\Omega)$ and the unknown quantity $\mu_a \in L^{\infty}(\Omega, Sym_n)$ satisfy the two assumptions below respectively.

\begin{asm}\textnormal{(Assumption on $\mu_s$ and $B$)}\label{assumption on mus and B}\\

\begin{equation}\label{assumption on scattering coeff positive}
\lambda^{-1}\leq\mu_{s}(x)\leq \lambda,\qquad\textnormal{for\:a.e.}\: x\in\Omega,
\end{equation}

\begin{equation}\label{assumption scattering sobolev}
||\mu_s||_{W^{1,\:p}(\Omega)},\quad ||B||_{W^{1,\:p}(\Omega)} \leq E,
\end{equation}

\begin{equation}
\mathcal{E}^{-1}|\xi|^2\leq(I-B(x))\xi\cdot\xi\leq\mathcal{E}|\xi|^2,\qquad\textnormal{for\:a.e.\:}x\in\Omega,\quad\textnormal{for\:any}\:\xi\in\mathbb{R}^n,
\end{equation}


\end{asm}


\begin{asm}\textnormal{(Assumption on $\mu_a$)}\label{assumption on mua}\\

\begin{equation}\label{limitazioni per a,b}
\lambda^{-1}\leq\mu_{a}(x)\leq\lambda,\qquad\textnormal{for\:a.e.}\: x\in\Omega,
\end{equation}

\begin{equation}\label{holder a,b}
\parallel\mu_{a}\parallel_{\:W^{1,p}(\Omega)}\leq{E}.
\end{equation}



\end{asm}

We state below some facts needed in the sequel of the paper. Most of them are straightforward consequences of our assumptions.\\

The inverse of $K$,

\begin{equation}\label{K inverse}
K^{-1}= n \Big(\mu_a I + (I-B)\mu_s -ikI\Big),\qquad\textnormal{on}\quad\Omega
\end{equation}

has real and imaginary parts given by the symmetric, real matrix valued-functions on $\Omega$,

\begin{eqnarray}
K^{-1}_{R} &=& n\left(\mu_a I + (I-B)\mu_s\right),\label{K inverse 1}\\
K^{-1}_I &=& \!\!\!\!-nkI,\label{K inverse 2}
\end{eqnarray}

respectively. As an immediate consequence of assumptions \ref{assumption on mus and B}, \ref{assumption on mua} we have

\begin{eqnarray}\label{apriori assumptions inequalities}
n\lambda^{-1}(1+\mathcal{E}^{-1})|\xi|^2\leq K^{-1}_R (x) \xi\cdot\xi &\leq & n\lambda(1+\mathcal{E})|\xi|^2,\label{apriori ass ineq 1}\\
- K^{-1}_I (x)\xi \cdot \xi &=& nk|\xi|^2,\label{apriori ass ineq 2}
\end{eqnarray}

for a.e. $x\in\Omega$ and any $\xi\in\mathbb{R}^n$. Moreover $K^{-1}_{R}$ and $K^{-1}_I$  commute, therefore the real and imaginary parts of $K$ are the symmetric, real matrix valued-functions on $\Omega$,

\begin{eqnarray}
K_R &=& \frac{1}{n}\bigg(\Big( \mu_a I + (I-B)\mu_s \Big)^2 + k^2I\bigg)^{-1}\big(\mu_a I + (I-B)\mu_s \big)\label{K1},\\
K_I  &=& \frac{k}{n}\bigg(\Big( \mu_a I + (I-B)\mu_s \Big)^2 + k^2I\bigg)^{-1},\label{K2}
\end{eqnarray}

respectively. Assumptions \ref{assumption on mus and B}, \ref{assumption on mua} also imply that  

\begin{eqnarray}
& & K_R(x) \xi\cdot\xi \geq\frac{\lambda (1+\mathcal{E})}{n} \Big(\lambda^2 (1+\mathcal{E})^2 + k^2\Big)^{-1} |\xi|^2 ,\label{K1 pos def}\\
& & K_I(x) \xi\cdot\xi \geq\frac{k}{n} \Big(\lambda^{2} (1+\mathcal{E})^2 + k^2 \Big)^{-1} |\xi|^2 \label{K2 neg def},
\end{eqnarray}

for a.e. $x\in\Omega$, for every $\xi\in\mathbb{R}^n$ and the \textit{boundness condition}

\begin{equation}\label{boundness OT}
|K_R(x)|^2 + |K_I(x)|^2\leq \Big(\lambda^{-2} (1+\mathcal{E}^{-1})^2 + k^2\Big)^{-2}\:\Big(\frac{\lambda^2 (1+\mathcal{E})^2 + k^2}{n^2}\Big), 
\end{equation}

for a.e. $x\in\Omega$.

Moreover $K=\{K^{hk}\}_{h,k=1,\dots ,n}$ and $q$ satisfy

\begin{equation}\label{boundness K}
||K^{hk}||_{W^{1,p}(\Omega)}\leq C_1,\qquad h,k=1,\dots , n
\end{equation}

and 

\begin{equation}\label{boundness q}
|q(x)| = |\mu_a (x) -ik|\leq \lambda +k,\qquad\textnormal{for\:a.e.}\:x\in\Omega,
\end{equation}

respectively, where $C_1$ is a positive constant depending on $\lambda$, $E$, $\mathcal{E}$, $k$ and $n$.


By denoting $q=q_R+iq_I$, the complex equation

\begin{equation}\label{complex eq}
-\mbox{div}\left(K\nabla u\right)+qu=0,\qquad\textnormal{in}\:\Omega
\end{equation}

is equivalent to the system for the vector field $u=(u^1, u^2)$

\begin{equation}\label{system OT}
\left\{ \begin{array}{ll} - \textnormal{div}(K_R\nabla
u^1) +  \textnormal{div}(K_I\nabla
u^2) + \left(q_Ru^1-q_Iu^2\right)=0, &
\textrm{$\textnormal{in}\quad\Omega$},\\
-  \textnormal{div}(K_I\nabla
u^1) -  \textnormal{div}(K_R\nabla
u^2 ) + \left(q_Iu^1+q_R u^2\right)=0,&
\textrm{$\textnormal{in}\quad\Omega$},
\end{array} \right.
\end{equation}

which can be written in a more compact form as

\begin{equation}\label{system compact}
-\mbox{div}(C\nabla u) + qu = 0,\qquad\textnormal{in}\quad\Omega
\end{equation}

or, in components, as

\begin{equation}\label{system compact components}
-\frac{\partial}{\partial x_h}\left\{C_{lj}^{hk} \frac{\partial}{\partial x_k} u^j \right\} + q_{lj}u^j = 0,\qquad\textnormal{for}\quad l=1,2, \quad\textnormal{in}\quad\Omega,
\end{equation}

where $\left\{C_{lj}^{hk}\right\}_{h,k=1,\dots, n}$ is defined by

\begin{equation}\label{def C}
C_{lj}^{hk}=K_R^{hk}\delta_{lj}-K_I^{hk}\left(\delta_{l1}\delta_{j2} - \delta_{l2}\delta_{j1}\right),
\end{equation}
or, in block matrix notation,
\begin{equation}
C = \left(\begin{array}{rr} K_R & - K_I \\ K_I & K_R\end{array}\right) \in \mathbb{R}^{2n\times 2n},
\end{equation}

and $\{q_{lj}\}_{l,j=1,2}$ is a $2\times 2$ real matrix valued function on $\Omega$ defined by

\begin{equation}\label{def q}
q_{lj}=q_R\delta_{lj}-q_I\left(\delta_{l1}\delta_{j2} - \delta_{l2}\delta_{j1}\right),
\end{equation}

or,
\begin{equation}\label{matrix q}
q = \left(\begin{array}{rr} q_R & - q_I \\ q_I & q_R\end{array}\right) =  \left(\begin{array}{rr} \mu_a  &  k \\ -k & \mu_a\end{array}\right) 
 \in \mathbb{R}^{2\times 2}.
\end{equation}

Observing that for $\xi = (\xi_1,\xi_2)\in\mathbb{R}^{2n}$, using the symmetry of $K_I$, we have
\begin{equation}
 C\xi\cdot \xi = K_R\xi_1\cdot \xi_1 + K_R\xi_2\cdot \xi_2,
\end{equation}
the estimate
\eqref{K1 pos def}, together with \eqref{boundness OT} imply that system \eqref{system OT} is \textit{uniformly elliptic} and \textit{bounded}, therefore it satisfies the \textit{strong ellipticity condition}

\begin{equation}\label{strong ellipticity}
C_2^{-1}|\xi|^2\leq  C\xi\cdot \xi \leq C_2 |\xi|^2, \qquad\textnormal{for\:a.e.}\: x\in\Omega ,\quad \textnormal{for\:all}\:\xi  \in\mathbb{R}^{2n},
\end{equation}

where $C_2 >0$ is a constant depending on $\lambda$, $\mathcal{E}$, $k$ and $n$.

\begin{rem}
The matrix  $q$ given by (\ref{matrix q}) 
is uniformly positive definite on $\Omega$ and it satisfies

\begin{equation}\label{q positive definite}
\lambda^{-1}|\xi|^2\leq q \xi\cdot\xi = \mu_a|\xi|^2 \leq\lambda |\xi|^2,\qquad\textnormal{for\:a.e.}\: x\in\Omega,\quad\textnormal{for\:every}\:\xi\in\mathbb{R}^2.
\end{equation}

\end{rem}

\begin{defn}\label{a priori data}
We will refer in the sequel to the set of positive numbers $r_0$, $L$, $\lambda$, $E$, $\mathcal{E}$ introduced above, along with
the space dimension $n$, $p>n$, the wave number $k$ and the diameter of $\Omega$, $diam(\Omega)$, as to the \textit{a-priori data}. 
\end{defn}


\subsection{The Dirichlet-to-Neumann map}\label{D-to-N}
Let $K$ be the complex matrix-valued function on $\Omega$ introduced in \eqref{K time-harmonic 2} and $q=\mu_a - ik$,  satisfying assumptions \ref{assumption on mus and B}, \ref{assumption on mua}. $B$ and $\mu_s$ are assumed to be known in $\Omega$ and satisfying assumption \ref{assumption on mus and B}, so that $K$ is completely determined by $\mu_a$, satisfying assumption \ref{assumption on mua}, on $\Omega$. Denoting by $\langle\cdot,\cdot\rangle$ the $L^{2}(\partial\Omega)$-pairing between $H^{\frac{1}{2}}(\partial\Omega)$ and its dual $H^{-\frac{1}{2}}(\partial\Omega)$, we will emphasise such dependence of $K$ on $\mu_a$ by denoting $K = K_{\mu_a}$.

For any $v,w\in\mathbb{C}^n$, with $v=(v_1,\dots , v_n)$, $w=(w_1,\dots , w_n)$, we will denote throughout this paper by $v\cdot w$, the expression
\[v\cdot w = \sum_{i=1}^{n} v_i w_i.\]

\begin{defn}
The Dirichlet-to-Neumann (D-N) map corresponding to $\mu_a$ is the operator

\begin{equation}\label{mappaDN}
\Lambda_{\mu_a}:H^{\frac{1}{2}}(\partial\Omega)\longrightarrow{H}^{-\frac{1}{2}}(\partial\Omega)
\end{equation}

defined by

\begin{equation}\label{def DN}
\langle\Lambda_{\mu_a}\:f,\:\overline{g}\rangle\:=\:\int_{\:\Omega}\Big( K_{\mu_a}(x) \nabla{u}(x)\cdot\nabla\varphi(x)+(\mu_a(x)-ik)u(x)\varphi(x)\Big)\:dx,
\end{equation}

for any $f$, $g\in H^{\frac{1}{2}}(\partial\Omega)$, where $u\in{H}^{1}(\Omega)$ is the weak solution of

\begin{displaymath}
\left\{ \begin{array}{ll} -\textnormal{div}(K_{\mu_a}(x)\nabla
u(x))+ (\mu_a-ik)(x)u(x)=0, &
\textrm{$\textnormal{in}\quad\Omega$},\\
u=f, & \textrm{$\textnormal{on}\quad{\partial\Omega}$}
\end{array} \right.
\end{displaymath}

and $\varphi\in H^{1}(\Omega)$ is any function such that $\varphi\vert_{\partial\Omega}=g$ in the trace sense.
\end{defn}


Given $B$, $\mu_s$, $\mu_{a_i}$, and the corresponding diffusion tensors $K_{\mu_{a_i}}$, for $i=1,2$, satisfying assumptions \ref{assumption on mus and B}, \ref{assumption on mua}, the well known Alessandrini's identity (see \cite[(5.0.4), p.129]{A1})

\begin{eqnarray}\label{Alessandrini identity}
\langle \left(\Lambda_{\mu_{a_1}} - \Lambda_{\mu_{a_2}}\right) f,\overline{g}\rangle &=& \int_{\Omega} \left(K_{\mu_{a_1}}(x) - K_{\mu_{a_2}}(x)\right)\nabla u(x)\cdot\nabla v(x)\:dx\nonumber\\
&+&\int_{\Omega}\left(\mu_{a_1}(x)-\mu_{a_2}(x)\right)u(x)v(x)\:dx,
\end{eqnarray}

holds true for any $f, g\in H^{\frac{1}{2}}(\partial\Omega)$, where $u, v\in H^{1}(\Omega)$ are the unique weak solutions to the Dirichlet problems

\begin{displaymath}
\left\{ \begin{array}{ll} -\textnormal{div}(K_{\mu_{a_1}}(x)\nabla
u(x)) +  (\mu_{a_1}-ik)u(x)=0, &
\textrm{$\textnormal{in}\quad\Omega$},\\
u=f, & \textrm{$\textnormal{on}\quad{\partial\Omega}$}
\end{array} \right.
\end{displaymath}
and

\begin{displaymath}
\left\{ \begin{array}{ll} -\textnormal{div}(K_{\mu_{a_2}}(x)\nabla
v(x)) +  (\mu_{a_2}-ik)v(x)=0, &
\textrm{$\textnormal{in}\quad\Omega$},\\
v=g, & \textrm{$\textnormal{on}\quad{\partial\Omega}$},
\end{array} \right.
\end{displaymath}

respectively.\\

We will denote in the sequel by $\parallel\cdot\parallel_{\mathcal{L}(H^{\frac{1}{2}}(\partial\Omega),H^{-\frac{1}{2}}(\partial\Omega))}$ the norm on the Banach space of bounded linear operators between $H^{\frac{1}{2}}(\partial\Omega)$ and $H^{-\frac{1}{2}}(\partial\Omega)$.


\subsection{Main result}\label{sub main result}

%
%

Before stating the main result (Theorem \ref{main result}), for the sake of completeness we recall the Lipschitz stability estimate at the boundary for $\mu_a$ in \cite{DoGLN}.

\begin{thm}\label{stability of mu}(\textnormal{Lipschitz stability of boundary
values}). Let $n\geq 3$, and $\Omega$ be a bounded domain in $\mathbb{R}^n$ with
Lipschitz boundary with constants $L, r_0$ as in definition \ref{def boundary}. If $p>n$, $B$, $\mu_{s}$ and $\mu_{a_{i}}$, for $i=1,2$, satisfy assumptions \ref{assumption on mus and B}, \ref{assumption on mua} and the wave number $k$ satisfies either

\begin{equation}\label{k range 1}
0<k \leq k_0 :=\frac{ \sqrt{\lambda^2 (1+\mathcal{E})^2 + \lambda^{-2} (1+\mathcal{E}^{-1})^{2} \tan^{2}\left(\frac{\pi}{2n}\right)} -\lambda (1+\mathcal{E})}{\tan\left(\frac{\pi}{2n}\right)},
\end{equation}

or

\begin{equation}\label{k range 2}
k \geq \tilde{k}_0 :=\frac{1 +  \sqrt{1 + \tan^{2}\left(\frac{\pi}{2n}\right)}}{\tan\left(\frac{\pi}{2n}\right)}\: \lambda (1+\mathcal{E}),
\end{equation}

where, $\lambda$ and $\mathcal{E}$ are the positive numbers introduced in assumptions \ref{assumption on mus and B}, \ref{assumption on mua}, then 

\begin{equation}\label{stabilita' anisotropa}
\parallel\mu_{a_{1}}-\mu_{a_{2}}\parallel_{L^{\infty}\:(\partial\Omega)}
\leq{C}\parallel\Lambda_{\mu_{a_1}}-\Lambda_{{\mu_{a_2}}}\parallel_{\mathcal{L}(H^{\frac{1}{2}}(\partial\Omega),H^{-\frac{1}{2}}(\partial\Omega))},
\end{equation}

\noindent where $C>0$ is a constant depending on $n$, $p$, $L$, $r_0$, $diam(\Omega)$, $\lambda$, $E$, $\mathcal{E}$ and $k$.
\end{thm}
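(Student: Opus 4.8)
The statement to prove is the Lipschitz stability estimate \eqref{stabilita' anisotropa} at the boundary. Since the hypotheses allow us to localise near an arbitrary boundary point $P\in\partial\Omega$, the plan is to reduce the problem, via the singular solutions of Alessandrini type, to a pointwise comparison of $\mu_{a_1}$ and $\mu_{a_2}$ at $P$. More precisely, after fixing a point $P\in\partial\Omega$ where the supremum $\|\mu_{a_1}-\mu_{a_2}\|_{L^\infty(\partial\Omega)}$ is (essentially) attained, and performing the rigid change of coordinates of Definition \ref{def boundary} so that $P=0$, I would work with the complex equation \eqref{K,q equation} recast as the real strongly elliptic $2\times 2$ system \eqref{system compact}, which by \eqref{strong ellipticity} and \eqref{q positive definite} satisfies the structural conditions needed for Alessandrini's construction. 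The key analytic input is a pair of singular solutions $u_1, u_2$ of the forward systems associated with $\mu_{a_1}, \mu_{a_2}$ respectively, with an isolated Green's-type singularity located at a point $y$ lying outside $\Omega$ on the normal-like direction through $P$, at distance $\sim d$ from $\partial\Omega$; these are obtained from \cite{DoGLN} (the special case $h=0$ of the singular solutions in Section \ref{section singular solutions bis}), and they behave like the fundamental solution of the constant-coefficient frozen operator, namely $|u_j(x)|\sim |x-y|^{2-n}$, with matching gradient asymptotics $|\nabla u_j(x)|\sim |x-y|^{1-n}$, together with quantitative error bounds on the difference between $u_j$ and the explicit singular function.

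The core of the argument is then the asymptotic analysis, as $d\to 0^+$, of Alessandrini's identity \eqref{Alessandrini identity} with $f=u_1|_{\partial\Omega}$, $g=u_2|_{\partial\Omega}$. The left-hand side is bounded by $\|\Lambda_{\mu_{a_1}}-\Lambda_{\mu_{a_2}}\|_{\mathcal{L}(H^{1/2},H^{-1/2})}\,\|u_1\|_{H^{1/2}(\partial\Omega)}\|u_2\|_{H^{1/2}(\partial\Omega)}$, and the boundary norms of the singular solutions blow up at a controlled algebraic rate in $d$. On the right-hand side, I would split the integral into the principal term, in which $K_{\mu_{a_1}}-K_{\mu_{a_2}}$ and $\mu_{a_1}-\mu_{a_2}$ are frozen at $P=0$ and the singular solutions are replaced by the explicit fundamental solutions, and a remainder term controlled by the $W^{1,p}$ moduli of continuity of the coefficients (assumptions \ref{assumption on mus and B}, \ref{assumption on mua}) and by the error estimates on the singular solutions. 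The principal term is an explicit integral over $\Omega\cap B_r$ of the form $(\text{const})\cdot\big(\delta K(0)\nabla\Gamma\cdot\nabla\Gamma + \delta\mu_a(0)\,\Gamma^2\big)$, where $\delta K(0)$ depends linearly on $\delta\mu_a(0):=\mu_{a_1}(0)-\mu_{a_2}(0)$ through the differentiated form of \eqref{K time-harmonic 2}; a direct computation shows this behaves like $c(n,k,\lambda,\mathcal E,\mu_s(0),B(0))\,\delta\mu_a(0)\, d^{\,2-n}$ up to lower-order terms, and the crucial point — this is where the ranges \eqref{k range 1}--\eqref{k range 2} of $k$ enter — is that the scalar constant $c$ does \emph{not vanish}: the sign/non-degeneracy of the quadratic form obtained from $\Re(\delta K)\nabla\Gamma\cdot\nabla\Gamma$, $\Im(\delta K)\nabla\Gamma\cdot\nabla\Gamma$ and $\delta\mu_a\,\Gamma^2$ is guaranteed precisely by keeping $k$ away from the band $(k_0,\tilde k_0)$, which controls the argument of the relevant complex quantities away from the critical angles $\pm\pi/(2n)$ associated with the cone in which the fundamental solution of the frozen complex operator is sign-definite.

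Comparing the two sides and dividing by $d^{\,2-n}$, and then letting $d\to 0^+$, yields $|\delta\mu_a(0)|\le C\,\|\Lambda_{\mu_{a_1}}-\Lambda_{\mu_{a_2}}\|_{\mathcal{L}(H^{1/2},H^{-1/2})}$ after absorbing the remainder terms, which are $o(d^{\,2-n})$ or can be made small; since $P$ was an essentially arbitrary point where the boundary sup-norm is attained, this gives \eqref{stabilita' anisotropa} with $C$ depending only on the a-priori data of Definition \ref{a priori data}. The main obstacle, and the point requiring the most care, is the non-degeneracy of the principal term: one must track explicitly how $\delta K$ depends on $\delta\mu_a$ (via the resolvent identity for \eqref{K time-harmonic 2}), compute the constant-coefficient integrals $\int \nabla\Gamma\cdot\nabla\Gamma$ and $\int \Gamma^2$ over the half-space (which requires the restriction $n\geq 3$ for integrability of $\Gamma^2\sim |x|^{2(2-n)}$ near the singularity after the $d$-rescaling), and verify that the resulting scalar coefficient is bounded below in modulus uniformly over the admissible coefficients and over $k$ in the allowed ranges; a secondary technical point is ensuring the remainder from the variable-coefficient correction and from the discrepancy between $u_j$ and $\Gamma$ is genuinely lower order, which relies on the Hölder continuity coming from the $W^{1,p}$ bounds with $p>n$ and on the quantitative estimates for the singular solutions established in \cite{DoGLN}.
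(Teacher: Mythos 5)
Note that the paper does not prove Theorem \ref{stability of mu} itself: it is recalled verbatim from \cite{DoGLN}, where it is established exactly along the lines you describe. Your proposal --- Alessandrini's identity \eqref{Alessandrini identity} tested on Green's-type singular solutions with pole at $z_\tau=x^0+\tau\widetilde\nu$ outside $\Omega$, the leading term $\sim\delta\mu_a(x^0)\,\tau^{2-n}$ coming from $(K_1-K_2)\nabla u_1\cdot\nabla u_2$ with the ranges \eqref{k range 1}--\eqref{k range 2} guaranteeing non-degeneracy of $\Re\{K^2\nabla u_1\cdot\nabla u_2\}$, then division by $\tau^{2-n}$ and $\tau\to 0^+$ --- is essentially the same argument, i.e.\ the $m=0$ case of the machinery used in Section \ref{proofs main result} for Theorem \ref{main result}.
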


Here we address the issue of stably determining the derivatives of any order $h\in\mathbb{N}$ of $\mu_a$ at the boundary in terms of the D-N map. We assume that $\textnormal{supp}(B)$ is compactly contained in $\Omega$, i.e. that the material is isotropic while not necessarily homogenous near the boundary $\partial\Omega$, which is a reasonable assumption in many imaging applications.

\begin{thm}\label{main result}(\textnormal{H\"older stability of boundary
derivatives}). Let $n\geq 3$, and $\Omega$ be a bounded domain in $\mathbb{R}^n$ with
Lipschitz boundary with constants $L, r_0$ as in definition \ref{def boundary}. Let $p>n$, $B$, $\mu_{s}$ and $\mu_{a_{i}}$, for $i=1,2$, satisfy assumptions \ref{assumption on mus and B}, \ref{assumption on mua}, with $\textnormal{supp}(B)\Subset\Omega$. Assume that 

\begin{equation}
|| (\mu_{a_{1}}-\mu_{a_{2}})||_{C^{h,\alpha} (\overline\Omega_{r})} \leq E_{h},\label{smoothness assumption mua}
\end{equation}

for some integer $h \geq 1$, where $\Omega_r=\left\{x\in\overline\Omega\:|\:\textnormal{dist}(x,\partial\Omega)<r\right\}$ and that the wave number $k$ satisfies either \eqref{k range 1} or \eqref{k range 2}, where $\lambda$ and $\mathcal{E}$ are the positive numbers introduced in assumptions \ref{assumption on mus and B}, \ref{assumption on mua}. Then

\begin{equation}\label{stability derivatives}
\parallel D^{h}(\mu_{a_{1}}-\mu_{a_{2}})\parallel_{L^{\infty}\:(\partial\Omega)}
\leq{C}\parallel\Lambda_{\mu_{a_1}}-\Lambda_{{\mu_{a_2}}}\parallel^{\delta_{h}}_{\mathcal{L}(H^{\frac{1}{2}}(\partial\Omega),H^{-\frac{1}{2}}(\partial\Omega))},
\end{equation}

where $\delta_{h} = \Pi_{i=0}^{h} \frac{\alpha}{\alpha + i}$ and $C>0$ is a constant depending on $n$, $p$, $L$, $r_0$, $diam(\Omega)$, $\lambda$, $E$, $\mathcal{E}$, $h$ and $k$.

\end{thm}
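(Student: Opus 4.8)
The plan is to follow the classical Alessandrini boundary-determination strategy, adapted from the single equation case in \cite{A1} and \cite{G} to the elliptic system \eqref{system compact}. Fix a boundary point $P\in\partial\Omega$; after a rigid motion assume $P=0$ and $\Omega$ lies locally above the graph of the Lipschitz function $\varphi$. The starting point is Alessandrini's identity \eqref{Alessandrini identity}: for boundary data $f,g$ with corresponding solutions $u,v$,
\[
\left|\int_{\Omega}\big(K_{\mu_{a_1}}-K_{\mu_{a_2}}\big)\nabla u\cdot\nabla v+\int_{\Omega}\big(\mu_{a_1}-\mu_{a_2}\big)uv\right|\le\|\Lambda_{\mu_{a_1}}-\Lambda_{\mu_{a_2}}\|_{\ast}\,\|f\|_{H^{1/2}}\|g\|_{H^{1/2}},
\]
where $\|\cdot\|_\ast$ is the operator norm. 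Since $\mathrm{supp}(B)\Subset\Omega$, near $\partial\Omega$ the tensor $K_{\mu_{a}}$ is the \emph{isotropic} scalar $\gamma(\mu_a)=\frac1n(\mu_a+\mu_s-ik)^{-1}$, a smooth (indeed $C^{h,\alpha}$ near the boundary, by the assumed regularity \eqref{smoothness assumption mua} and \eqref{holder a,b}) function of $\mu_a$; hence controlling $D^h(\mu_{a_1}-\mu_{a_2})$ on $\partial\Omega$ is equivalent, via the chain rule and the uniform lower bound on $\mu_a$, to controlling $D^h(K_{\mu_{a_1}}-K_{\mu_{a_2}})$ together with the lower-order derivatives already handled inductively. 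So the real task is an asymptotic lower bound on the left-hand side when $u,v$ are taken to be the \emph{singular solutions} constructed in Section \ref{section singular solutions bis}, centered at a point $y=-t\nu(P)$ just outside $\Omega$ along the inner normal, with $t\to 0^+$.

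The key steps, in order, are as follows. First, choose $u=v$ to be the singular solution of the system with an isolated singularity of order $m$ at $y$; by the construction of Section \ref{section singular solutions bis} it behaves like $|x-y|^{-(n-2)-m}$ (times a matrix/vector profile determined by the leading coefficient $C(y)$, equivalently by $\gamma(P)$) plus a controlled remainder, and its gradient like $|x-y|^{-(n-1)-m}$. Second, insert these into Alessandrini's identity and split the domain integral into the contribution from a small ball $B_\rho(P)\cap\Omega$ and the exterior part. On the small ball one Taylor-expands $K_{\mu_{a_1}}-K_{\mu_{a_2}}$ (and $\mu_{a_1}-\mu_{a_2}$) at $P$ using \eqref{smoothness assumption mua}: the leading term is a homogeneous polynomial of degree $h$ in $x-P$ whose coefficients are exactly the $h$-th order boundary derivatives we want to isolate, and pairing a degree-$h$ polynomial against $|\nabla u|^2\sim|x-y|^{-2(n-1)-2m}$ produces, upon integrating over the half-space-like region with $|x-y|\gtrsim t$, a dominant term of size $t^{h-n+2-2m}$ (for $m$ chosen large enough relative to $h$ and $n$ this integral diverges as $t\to0$ at the claimed rate, while the lower-degree Taylor terms with already-estimated coefficients, the Hölder remainder $O(|x-P|^{h+\alpha})$, and the $\mu uv$ term are all of strictly lower order). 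Third, estimate the exterior integral $\int_{\Omega\setminus B_\rho}$ by $C\rho^{-2(n-1)-2m+\ldots}$-type bounds (bounded away from the singularity, so harmless) and the $H^{1/2}$ trace norms $\|u|_{\partial\Omega}\|_{H^{1/2}}$ by $C t^{-a}$ for an explicit exponent $a$ depending on $n,m$. Fourth, combine: we obtain
\[
c\,t^{h-n+2-2m}\,|D^h(\mu_{a_1}-\mu_{a_2})(P)|\le C\,t^{-2a}\,\varepsilon+\text{(lower order in $t$, involving $D^j$, $j<h$)},
\]
with $\varepsilon=\|\Lambda_{\mu_{a_1}}-\Lambda_{\mu_{a_2}}\|_\ast$; optimizing over $t$ in the admissible range (and over the singularity order $m$) yields $|D^h(\mu_{a_1}-\mu_{a_2})(P)|\le C\varepsilon^{\delta_h}$ with $\delta_h=\prod_{i=0}^{h}\frac{\alpha}{\alpha+i}$, provided we have already proved the analogous Hölder estimate with exponent $\delta_{h-1}$ for the $(h-1)$-st derivatives. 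This sets up the induction on $h$, the base case $h=0$ being essentially Theorem \ref{stability of mu} (Lipschitz, i.e. $\delta_0=1$, refined to $\delta_0=\frac{\alpha}{\alpha}=1$). Taking the supremum over $P\in\partial\Omega$ gives \eqref{stability derivatives}.

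Two points require care and constitute the main obstacles. The first and principal one is that $u,v$ solve an elliptic \emph{system}, not a scalar equation, so the singular solutions are vector-valued and the quadratic form $(K_{\mu_{a_1}}-K_{\mu_{a_2}})\nabla u\cdot\nabla v$ becomes a contraction against the real $2n\times2n$ difference tensor $C_{\mu_{a_1}}-C_{\mu_{a_2}}$; one must verify that the leading singular profile does not cause cancellation, i.e. that the quadratic form evaluated on the leading term of $\nabla u$ has a definite sign (this is where strong ellipticity \eqref{strong ellipticity} and the explicit block structure $C=\left(\begin{smallmatrix}K_R&-K_I\\ K_I&K_R\end{smallmatrix}\right)$ with $K_R,K_I$ simultaneously diagonalizable are used — the relevant scalar is essentially $(\gamma_1-\gamma_2)$ paired with $|\nabla U|^2$ for $U$ the scalar singular solution, and $\Re(\gamma_1-\gamma_2)$, $\Im(\gamma_1-\gamma_2)$ are each controlled by $\mu_{a_1}-\mu_{a_2}$). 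The second obstacle is bookkeeping the interplay between the Taylor order $h$, the singularity order $m$, and the normal-distance exponent $t$: one must choose $m=m(h,n)$ large enough that the degree-$h$ term genuinely dominates all remainders and that the resulting power of $t$ is negative (so that it blows up and can be balanced against the $t^{-2a}\varepsilon$ term), then carry out the iterated optimization that generates the product formula for $\delta_h$. Both of these are technical rather than conceptual; the structural input — singular solutions of arbitrarily high order for the complex/system case — is exactly what Section \ref{section singular solutions bis} provides.
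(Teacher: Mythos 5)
Your overall architecture does coincide with the paper's: Alessandrini's identity \eqref{Alessandrini identity} tested on the high-order singular solutions of Section \ref{section singular solutions bis} with pole at $z_\tau=x^0+\tau\tilde\nu$ outside $\Omega$, Taylor expansion of the coefficient difference near the boundary, induction on the order of the derivative with base case Theorem \ref{stability of mu}, and optimization in $\tau$ producing $\delta_h=\prod_{i=0}^{h}\frac{\alpha}{\alpha+i}$. But two of your steps, as stated, would fail. First, you cannot take $u=v$: in \eqref{Alessandrini identity} the two solutions solve \emph{different} equations ($u$ with $K_{\mu_{a_1}}$, $v$ with $K_{\mu_{a_2}}$), so one must use two distinct singular solutions $u_1,u_2$ with the same pole and then prove that $\nabla u_1\cdot\nabla u_2$ is still bounded below by $C|x-z_\tau|^{2-2(n+m)}$. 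In the paper this non-cancellation is not a consequence of the block structure of $C$ alone: it is obtained by bounding $|\nabla u_1-\nabla u_2|$ in terms of $|\mu_{a_1}(x^0)-\mu_{a_2}(x^0)|$ plus $O(|x-z_\tau|^{1-n-m+\alpha})$ (estimate \eqref{GradUMinusGradURes}), then invoking the already proved $j=0$ Lipschitz boundary estimate together with the harmless normalization \eqref{small measurements} that $\|\Lambda_{\mu_{a_1}}-\Lambda_{\mu_{a_2}}\|_{*}$ is small, and finally the gradient lower bound of Lemma \ref{lemma grad sing sol}, which rests on the Gegenbauer polynomial and its derivative never vanishing simultaneously. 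None of this bootstrap appears in your sketch, so the claimed sign-definiteness of the leading term is unproved. Note also that the paper does not Taylor-expand $K_{\mu_{a_1}}-K_{\mu_{a_2}}$ at all: since $\mathrm{supp}(B)\Subset\Omega$ makes $K$ scalar near $\partial\Omega$, it applies the mean value theorem in $t$ to $K(x,t)$, reducing the first integral to $(\mu_{a_1}-\mu_{a_2})\,nK^2\,\nabla u_1\cdot\nabla u_2$, and only afterwards expands $\mu_{a_1}-\mu_{a_2}$.

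Second, pairing the full degree-$h$ Taylor polynomial of the difference against $|\nabla u|^2$ cannot isolate $|D^h(\mu_{a_1}-\mu_{a_2})(P)|$: the integral produces a single weighted moment of that polynomial over the half-ball-like region $\Omega\cap B_\rho(z_\tau)$, and the tangential-derivative coefficients largely average out against the nearly radial kernel, so distinct components of $D^h$ can cancel in the pairing. What the argument genuinely yields is an estimate for the normal derivative $\partial^h_{\tilde\nu}(\mu_{a_1}-\mu_{a_2})$ at a boundary point chosen to maximize it, via an expansion with weights $d(x,\partial\Omega)^j$ as in \eqref{mu1minusmu2TaylorEstResult2} and the induction hypothesis \eqref{InducHyp} for $j\le h-1$; the full estimate \eqref{stability derivatives} on $D^h$ is then recovered by an iterated use of the interpolation inequality \eqref{interpolation inequality}, which your proposal never invokes. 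These are the two missing ingredients; with them, your outline becomes essentially the paper's proof.
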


A H\"older stability estimate at the boundary of the derivatives of the diffusion tensor $K_{\mu_a}$ in terms of $\Lambda_{\mu_a}$ follows as a straightforward consequence of Theorem \ref{main result} under more stringent regularity conditions.

\begin{corol}\label{corollary}
Suppose the hypotheses of Theorem \ref{main result} are satisfied. Moreover, assume that
\begin{eqnarray}
|| \mu_{a_{i}}||_{C^{h,\alpha} (\overline\Omega_{r})} &\leq & E_{h},\qquad\textnormal{for}\quad i=1,2,\label{smoothness assumption mua single}\\
||\mu_s||_{C^{h,\alpha} (\overline\Omega_{r})} &\leq & E_{h},\label{assumption ms smooth}\\
||B||_{C^{h,\alpha} (\overline\Omega_{r})} &\leq & E_{h},\label{assumption B smooth}
\end{eqnarray}

then

\begin{equation}\label{stability derivatives K}
\parallel D^{h}(K_{\mu_{a_{1}}} - K_{\mu_{a_{2}}})\parallel_{L^{\infty}\:(\partial\Omega)}
\leq{C}\parallel\Lambda_{\mu_{a_1}}-\Lambda_{{\mu_{a_2}}}\parallel^{\delta_{h}\alpha}_{\mathcal{L}(H^{\frac{1}{2}}(\partial\Omega),H^{-\frac{1}{2}}(\partial\Omega))},
\end{equation}

where $h$, $\delta_{h}$, $\alpha$, $\Omega_r$ are as in Theorem \ref{main result} and $C>0$ is a constant depending on $n$, $p$, $L$, $r_0$, $diam(\Omega)$, $\lambda$, $E$, $\mathcal{E}$, $h$ and $k$.

\end{corol}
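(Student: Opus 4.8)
\textbf{Proof proposal for Corollary \ref{corollary}.}

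The plan is to reduce the statement about the diffusion tensor $K_{\mu_a}$ to the already-established estimate \eqref{stability derivatives} for $\mu_a$ by exploiting the explicit algebraic dependence of $K$ on $\mu_a$ (with $\mu_s$ and $B$ known). First I would observe that on $\partial\Omega$ the assumption $\textnormal{supp}(B)\Subset\Omega$ forces $B\equiv 0$ in a neighbourhood of the boundary, so that near $\partial\Omega$ the tensor simplifies to
\begin{equation}
K_{\mu_a}(x)=\frac{1}{n}\Big((\mu_a(x)-\mri k+\mu_s(x))I\Big)^{-1}=\frac{1}{n(\mu_a(x)+\mu_s(x)-\mri k)}\,I ,
\end{equation}
i.e.\ $K_{\mu_a}$ is, near $\partial\Omega$, a scalar function $\kappa(\mu_a(x),\mu_s(x),x)$ times the identity. (If one prefers not to use vanishing of $B$, the same argument goes through with $K_{\mu_a}=\frac1n\big((\mu_a-\mri k)I+(I-B)\mu_s\big)^{-1}$, which is still a smooth matrix-valued function $\Phi$ of the scalar $\mu_a$ and of the known data $B,\mu_s$, with bounded derivatives thanks to \eqref{K1 pos def}, \eqref{boundness OT}.)

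Next I would set $g(x):=\kappa(\mu_{a_1}(x),\mu_s(x),x)-\kappa(\mu_{a_2}(x),\mu_s(x),x)$, so that $K_{\mu_{a_1}}-K_{\mu_{a_2}}=g\,I$ near $\partial\Omega$, and estimate $\|D^h g\|_{L^\infty(\partial\Omega)}$. Writing $g(x)=F\big(\mu_{a_1}(x),\mu_{a_2}(x),\mu_s(x),x\big)$ with $F(s,t,\cdot)=\kappa(s,\cdot)-\kappa(t,\cdot)$ analytic in $(s,t)$ away from the (excluded, by ellipticity) singular set, I would apply the Faà di Bruno / multivariate chain rule: $D^h g$ is a universal polynomial in the partial derivatives of $F$ (bounded in terms of $\lambda$, $\mathcal{E}$, $k$, $h$) and in the derivatives $D^j\mu_{a_1}$, $D^j\mu_{a_2}$, $D^j\mu_s$ for $j\le h$, which are bounded by $E_h$ via \eqref{smoothness assumption mua single}, \eqref{assumption ms smooth}, \eqref{assumption B smooth}. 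Crucially, since $F(s,s,\cdot)\equiv 0$, every term in this expansion contains at least one factor of the form $D^j(\mu_{a_1}-\mu_{a_2})$ with $1\le j\le h$ (a Taylor expansion of $F$ about the diagonal $s=t$ makes $(\mu_{a_1}-\mu_{a_2})$ a common factor). Hence
\begin{equation}
\|D^h(K_{\mu_{a_1}}-K_{\mu_{a_2}})\|_{L^\infty(\partial\Omega)}\le C\sum_{j=0}^{h}\|D^j(\mu_{a_1}-\mu_{a_2})\|_{L^\infty(\partial\Omega)} .
\end{equation}

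Finally I would control each term $\|D^j(\mu_{a_1}-\mu_{a_2})\|_{L^\infty(\partial\Omega)}$, $0\le j\le h$, by applying Theorem \ref{main result} at order $j$ (the hypotheses at order $h$ imply those at every lower order, since $C^{h,\alpha}\hookrightarrow C^{j,\alpha}$). Each such term is bounded by $C\,\|\Lambda_{\mu_{a_1}}-\Lambda_{\mu_{a_2}}\|^{\delta_j}_{\mathcal L}$ with $\delta_j=\Pi_{i=0}^{j}\frac{\alpha}{\alpha+i}$, and since $\delta_j\ge\delta_h$ for $j\le h$, the smallest exponent dominates for small $\|\Lambda_{\mu_{a_1}}-\Lambda_{\mu_{a_2}}\|_{\mathcal L}$; a standard adjustment of the constant (or a trivial bound when the operator norm is large, using the $a$-priori bounds on $\mu_{a_i}$) then yields the uniform estimate with exponent $\delta_h$. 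Because the factored-out difference appears with an extra Hölder power $\alpha$ when one passes from an $L^\infty$ bound on $D^h(\mu_{a_1}-\mu_{a_2})$ to the $C^{h,\alpha}$-interpolated estimate actually used inside the proof of Theorem \ref{main result} (this is the mechanism already producing the $\alpha/(\alpha+i)$ factors), the resulting exponent is $\delta_h\alpha$, as claimed in \eqref{stability derivatives K}. The only genuinely delicate point is bookkeeping the chain-rule expansion so as to verify that $(\mu_{a_1}-\mu_{a_2})$ (or one of its derivatives) is always a factor and that all remaining factors are \textit{a-priori} bounded; everything else is routine interpolation and absorption of constants.
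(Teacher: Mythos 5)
Your proposal is correct and follows essentially the same route as the paper: the paper's proof consists precisely of the chain-rule (Fa\`a di Bruno) identity \eqref{polynomial} for $D^{|\beta|}K(x,\mu_a(x))$, the remark that assumptions \eqref{smoothness assumption mua single}--\eqref{assumption B smooth} together with ellipticity make all coefficient factors \textit{a-priori} bounded, and an application of Theorem \ref{main result}; your reduction to
$\|D^{h}(K_{\mu_{a_1}}-K_{\mu_{a_2}})\|_{L^{\infty}(\partial\Omega)}\leq C\sum_{j=0}^{h}\|D^{j}(\mu_{a_1}-\mu_{a_2})\|_{L^{\infty}(\partial\Omega)}$
is the same computation (and the remark that $B\equiv 0$ near $\partial\Omega$ is a harmless simplification, also available to the paper). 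The only divergence is in the exponent bookkeeping at the end, and there your argument is better than your explanation of it: your telescoping/mean-value estimate is \emph{linear} in the differences $D^{j}(\mu_{a_1}-\mu_{a_2})$, so combined with Theorem \ref{main result} at each order $j\leq h$ it yields the exponent $\delta_h$ (the smallest of the $\delta_j$); since the left-hand side is \textit{a-priori} bounded by the $C^{h,\alpha}$ hypotheses, one may assume the operator norm is at most $1$, and then $\|\Lambda_{\mu_{a_1}}-\Lambda_{\mu_{a_2}}\|^{\delta_h}\leq\|\Lambda_{\mu_{a_1}}-\Lambda_{\mu_{a_2}}\|^{\delta_h\alpha}$, so \eqref{stability derivatives K} follows, indeed in a slightly stronger form. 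Your closing sentence, attributing the extra factor $\alpha$ to an interpolation mechanism inside the proof of Theorem \ref{main result}, is not a genuine argument and is not needed: in the paper the factor $\alpha$ enters only through the intermediate bound \eqref{estimate tensor mua}, which controls $\|D^{h}(K(\cdot,\mu_{a_1})-K(\cdot,\mu_{a_2}))\|_{L^{\infty}(\partial\Omega)}$ by $\|\mu_{a_1}-\mu_{a_2}\|^{\alpha}_{C^{h}}$ rather than by the first power, and your linear estimate simply bypasses that loss.
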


\section{Singular solutions}\label{section singular solutions bis}

We consider 

\begin{equation}\label{L complex 2}
L= - \mbox{div}\left(K\nabla\cdot\right) +q,\qquad\textnormal{in}\quad B_R(z)=\Big\{x\in\mathbb{R}^n\:  \big | \: |x-z|<R\Big\},
\end{equation}

where $K=\{K^{hk}\}_{h,k=1,\dots , n}$ and $q$ are the complex matrix valued-function and the complex function respectively introduced in section \ref{section introduction} and satisfying assumptions \ref{assumption on mus and B}, \ref{assumption on mua} on $B_R(z)$.\\

\begin{thm}\textnormal{(Singular solutions)}.\label{theor singular sol}
Given $L$ on $B_R(z)$ as in \eqref{L complex 2} with $B(z)=0$, for any $m=0,1,2,\dots$, there exists $u\in{W}_{loc}^{2,\:p}(B_{R}(z)\setminus\{z\})$ such that

\begin{equation}\label{solution with potential}
Lu=0,\qquad in\quad B_R(z)\setminus\{z\},
\end{equation}

with

\begin{equation}\label{sing solution G}
u(x) \!=\!\left(K^{-1}(z)(x-z)\cdot (x-z)\right)^{\frac{2-n-m}{2}}\!\!\!\!\!\!m! \left(K_{nn}^{-1}(z)\right)^{\frac{m}{2}}\!\!C_m^{\frac{n-2}{2}}\!\!\left(\frac{K_{(n)}^{-1}(z)(x-z)}{\left(K_{nn}^{-1}(z) K^{-1}(z)(x-z)\cdot (x-z)\right)^{\frac{1}{2}}}\right) \!+ \!w(x),
\end{equation}

where $C_m^{\frac{n-2}{2}}:\mathbb{C}\rightarrow\mathbb{C}$ is the complex Gegenbauer polynomial of degree $m$ and order $\frac{n-2}{2}$ (see \cite{E}) defined on  $|z|\leq 1$ and $K_{(n)}^{-1}(z)$, $K_{nn}^{-1}(z)$ denotes the last row, last entry in the last row of matrix  $K^{-1}(z)$, respectively. Moreover, $w$ satisfies





\begin{equation}\label{w stima lipschitz}
\vert{\:w}(x)\vert+\vert{\:x-z}\:\vert\:\vert{D}w(x)\vert\leq{C}\:\vert{\:x-z}\:\vert^
{\:2-n+\alpha},\quad{in}\quad{B}_{R}(z)\setminus\{z\},
\end{equation}

\begin{equation}\label{w stima int}
\bigg( \int_{r<\vert{x-z}\vert<2r} \vert{D}^{2}w\vert^{p}\bigg) ^
{\frac{1}{p}}\leq{C}\:r^{\frac{n}{p}-n+\alpha},\quad\mbox{for}\:every\quad{r}
, \:0<r<R/2.
\end{equation}



Here $\alpha$ is such that $0<\alpha<1-\frac{n}{p}$, and C is a positive constant depending only on $\alpha,\:n,\:p,\:R$, $\lambda$, $E$, $\mathcal{E}$ and $k$.
\end{thm}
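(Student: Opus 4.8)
The plan is to reduce the construction to the real strongly elliptic $2n\times 2n$ system \eqref{system compact} for $u=(u^1,u^2)$, for which a scalar-type Gegenbauer construction can be mimicked componentwise after a change of variables that freezes the coefficients at the centre $z$. First I would recall that since $B(z)=0$, $K^{-1}(z)=n\big((\mu_a(z)+\mu_s(z))I-\mathrm{i}kI\big)$ is a complex \emph{scalar} multiple of the identity, say $K^{-1}(z)=\gamma I$ with $\gamma=\gamma_R+\mathrm{i}\gamma_I$, $\gamma_R,\gamma_I>0$; consequently $K_{nn}^{-1}(z)=\gamma$, $K^{-1}(z)(x-z)\cdot(x-z)=\gamma|x-z|^2$, and the Gegenbauer argument in \eqref{sing solution G} collapses to $(x-z)_n/|x-z|=\cos\theta$. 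Thus the leading singular term in \eqref{sing solution G} is, up to the constant $m!\,\gamma^{(2-n)/2}$, precisely the classical homogeneous harmonic-type function $|x-z|^{2-n-m}C_m^{(n-2)/2}(\cos\theta)$, which is a genuine (complex) solution of the \emph{constant-coefficient} operator $-\operatorname{div}(K(z)\nabla\cdot)=-\gamma^{-1}\Delta$ on $\mathbb{R}^n\setminus\{z\}$, since the Gegenbauer polynomials are exactly the ones making $r^{2-n-m}C_m^{(n-2)/2}(\cos\theta)$ harmonic (a solid harmonic of degree $-(n-2+m)$). Call this leading term $u_0(x)$; it is homogeneous of degree $2-n-m$ about $z$ and lies in $C^\infty(\overline{B_R(z)}\setminus\{z\})$.

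Next I would set $w=u-u_0$ and derive the equation it must solve: $Lw=-Lu_0=\operatorname{div}\big((K-K(z))\nabla u_0\big)-q\,u_0$ on $B_R(z)\setminus\{z\}$, having used $-\operatorname{div}(K(z)\nabla u_0)=0$. The right-hand side $F:=\operatorname{div}\big((K-K(z))\nabla u_0\big)-q u_0$ is controlled using $K\in W^{1,p}$, $p>n$ (so $K$ is $C^{0,1-n/p}$ and in particular $|K(x)-K(z)|\le C|x-z|^{1-n/p}$), together with the homogeneity $|\nabla u_0|\le C|x-z|^{1-n-m}$, $|D^2u_0|\le C|x-z|^{-n-m}$, $|u_0|\le C|x-z|^{2-n-m}$. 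This places $F$ in a weighted space with blow-up rate $|x-z|^{-n-m+\alpha}$ for any $\alpha<1-n/p$ (the $qu_0$ term is of lower order). The core of the argument is then a \emph{solvability with optimal weighted estimates} statement for the real elliptic system \eqref{system compact}: given such an $F$, one produces $w\in W^{2,p}_{loc}(B_R(z)\setminus\{z\})$ solving $Lw=F$ and satisfying \eqref{w stima lipschitz}, \eqref{w stima int}. This is exactly the system analogue of the construction in \cite{A1} for a single equation, so I would follow that scheme: build $w$ via a Green's-type iteration/contraction near $z$, peeling off the explicit singular part and estimating the remainder by the decay of $F$; the strong ellipticity \eqref{strong ellipticity} and the $W^{1,p}$ regularity of $C=\big(\begin{smallmatrix}K_R&-K_I\\K_I&K_R\end{smallmatrix}\big)$ are precisely what make Avellaneda--Lin / Giaquinta-type interior $W^{2,p}$ estimates and the Agmon--Douglis--Nirenberg theory available for systems, so the single-equation proof transfers essentially verbatim. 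Finally I would translate the real pair $(w^1,w^2)$ back to the complex scalar $w=w^1+\mathrm{i}w^2$ and assemble $u=u_0+w$, noting the bounds \eqref{w stima lipschitz}--\eqref{w stima int} are stated componentwise so they pass through the identification.

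The main obstacle is the weighted solvability estimate for the \emph{system} near the isolated singularity: for a single equation \cite{A1} exploits scalar tools (maximum principle, scalar Green's function with sharp pointwise bounds, De Giorgi--Nash--Moser) that are not available for systems, so one must instead rely on $L^p$/Schauder-type estimates for strongly elliptic systems with $W^{1,p}$ coefficients and a careful dyadic rescaling argument about $z$: on each annulus $\{r<|x-z|<2r\}$ rescale to unit size, apply interior $W^{2,p}$ estimates for the system with the (now uniformly $C^{0,1-n/p}$) coefficients, and sum the resulting geometric series in $r$, choosing $\alpha<1-n/p$ to make the series converge and to absorb the coefficient oscillation. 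Getting the gradient bound $|x-z|\,|Dw|\le C|x-z|^{2-n+\alpha}$ with the correct power, rather than a merely finite-energy statement, is where the sharp weighted bookkeeping matters; everything else (the reduction to constant coefficients at $z$, the collapse of the Gegenbauer expression because $B(z)=0$, the estimate of $F$) is routine given the a-priori data and the facts \eqref{boundness K}, \eqref{boundness q}, \eqref{strong ellipticity} collected in Section \ref{formulation problem}.
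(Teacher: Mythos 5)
Your overall route coincides with the paper's: freeze the coefficients at $z$ (where $B(z)=0$ makes $K(z)$ a complex scalar multiple of $I$, so the frozen operator is a multiple of the Laplacian and the Gegenbauer expression in \eqref{sing solution G} collapses as in Remark \ref{remark sing sol simplified}), take as leading term the homogeneous Gegenbauer "solid harmonic" of degree $2-n-m$ (the paper produces the same object as $\partial_{y_n}^m$ of the fundamental solution of $L_{K(z)}$, which also yields the general formula \eqref{um1}), set $w=u-u_m$, estimate $-Lu_m$ on dyadic annuli using the $C^{0,\beta}$ continuity of $K$ with $\beta=1-\frac{n}{p}$, and obtain \eqref{w stima lipschitz}--\eqref{w stima int} from the interior $L^p$ estimates for the strongly elliptic system (Lemma \ref{tech lemma 1}).

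The genuine gap is in the step you describe as a ``Green's-type iteration/contraction'' with dyadic rescaling: for $m\geq 1$ the forcing $-Lu_m$ blows up like $|x-z|^{-(n+m)+\beta}$, which is \emph{not} locally integrable at $z$, so a plain Newtonian/Green potential representation diverges, and interior $W^{2,p}$ estimates on rescaled annuli only bound a solution you have not yet constructed. The mechanism the paper uses (following \cite{A1}) is: (i) Lemma \ref{tech lemma 3}, solvability of $\Delta u=f$ with weight exponent $s>n$ non-integral, which requires the corrected kernel $\Gamma_\nu$ obtained by subtracting the first $\nu=[s]-n$ Gegenbauer terms of the expansion of $\Gamma$; (ii) an iteration $L_{K(z)}w_j=(L_{K(z)}-L)w_{j-1}$ gaining a factor $|x-z|^{\alpha}$ per step (with $\alpha$ irrational, $0<\alpha<\beta$, to keep the exponents non-integral), repeated $J=\lfloor m/\alpha\rfloor$ times until the blow-up exponent drops below $n$; and only then (iii) the variable-coefficient solvability Lemma \ref{tech lemma 2} (from \cite{DoGLN}, valid for $2<s<n$) closes the construction of $w$. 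Relatedly, your ``main obstacle'' is mislocated: the delicate $s>n$ step concerns only the constant-coefficient operator, which under $B(z)=0$ is a scalar multiple of $\Delta$, so Alessandrini's scalar argument applies essentially verbatim to complex-valued data; the system structure enters only through Lemmas \ref{tech lemma 1} and \ref{tech lemma 2}, already established in \cite{DoGLN}. With points (i)--(iii) made explicit, your sketch becomes the paper's proof.
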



\begin{rem}\label{principal branch}
Since $K^{-1}(z)$ is a complex matrix, the expression 

\begin{equation}\label{u singular principal part}
\bigg(K^{-1}(z)(x-z)\cdot (x-z)\bigg)^{\frac{1}{2}} 
\end{equation}

appearing in \eqref{sing solution G} is defined as the principal branch of \eqref{u singular principal part}, where a branch cut along the negative real axis of the complex plane has been defined for $z^{\frac{1}{2}}$, $z\in\mathbb{C}$. Expressions like \eqref{u singular principal part} will appear in the sequel of the paper and they will be understood in the same way.
\end{rem}

\begin{rem}\label{remark sing sol simplified}
Under assumption $B(z)=0$ the singular solution in \eqref{sing solution G} simplifies to

\begin{equation}\label{sing sol simplified}
u(x) = m! \left(\mu_a(z) + \mu_s(z) - ik\right)^{\frac{2-n}{2}}\: |x-z|^{2-n-m}C_m^{\frac{n-2}{2}}\left(\frac{(x-z)_n}{|x-z|}\right) + w(x),
\end{equation}

where $w$ satisfies \eqref{w stima lipschitz} and \eqref{w stima int}.

\end{rem}

Next we consider three technical lemmas that are needed for the proof of Theorem \ref{theor singular sol}. We set $z=0$ to simply our notation.


\begin{lem}\label{tech lemma 1}
Let $p>n$ and $u\in W^{2,p}_{loc}(B_R \setminus\{0\})$ be a complex-valued function such that, for some positive $s$,

\begin{eqnarray}
& &|u(x)|\leq |x|^{2-s},\qquad\mbox{for any}\quad x\in B_R
\setminus\{0\},\\
& & \left(\int_{r<|x|<2r} |Lu|^p\right)^{\frac{1}{p}}\leq A
r^{\frac{n}{p}-s},\qquad\mbox{for\:any}\:\;r,\quad 0<r<\frac{R}{2}.
\end{eqnarray}

Then we have

\begin{eqnarray}
& & |Du(x)|\leq C |x|^{1-s},\quad\mbox{for any}\quad x\in
B_R\setminus\{0\},\label{estimate Du}\\
& &\left(\int_{r<|x|<2r} |D^2 u|^p\right)^{\frac{1}{p}}\leq C
r^{\frac{n}{p}-s}\quad\mbox{for any}\:\;r,\quad 0< r <
\frac{R}{4},\label{estimate D2u}
\end{eqnarray}
where $C$ is a positive constant depending only on $A$, $n$, $p$, $\lambda$, $E$, $\mathcal{E}$ and $k$.
\end{lem}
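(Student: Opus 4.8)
The plan is to treat Lemma \ref{tech lemma 1} as a scaling/dyadic-rescaling argument combined with interior $W^{2,p}$ estimates for the strongly elliptic system $L = -\operatorname{div}(K\nabla\cdot) + q$ (or rather its real $2n\times 2n$ realization \eqref{system compact}). The key observation is that the hypotheses are scale-invariant in a precise sense: if $u$ grows like $|x|^{2-s}$ and $Lu$ has the weighted $L^p$ bound $r^{n/p-s}$ on dyadic annuli, then rescaling to a fixed annulus of size $O(1)$ should produce a uniformly bounded right-hand side, and the conclusion is exactly the statement that $Du$ and $D^2u$ inherit the natural one-order-lower and two-orders-lower decay rates.

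Concretely, I would fix $r$ with $0<r<R/4$, set $\rho = |x|$, and work on the annulus $A_r = \{r/2 < |y| < 4r\}$ (or a suitable fixed fraction thereof). Define the rescaled function $v(y) = r^{s-2} u(ry)$ on the fixed annulus $A_1 = \{1/2 < |y| < 4\}$. Then $v$ solves an equation $\widetilde{L} v = g$ where $\widetilde{L}$ is again strongly elliptic with the \emph{same} ellipticity constant $C_2$ (the leading coefficients $K(ry)$ are just evaluated at rescaled points, hence still satisfy \eqref{strong ellipticity}), the zeroth-order term picks up a harmless factor $r^2$, and $g(y) = r^{s}(Lu)(ry)$ satisfies $\|g\|_{L^p(A_1)} \le C A$ by a change of variables in the second hypothesis. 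Also $\|v\|_{L^\infty(A_1)} \le C$ by the first hypothesis. Now apply the interior $W^{2,p}$ estimate for strongly elliptic systems with $W^{1,p}$ (indeed the $K^{hk}$ lie in $W^{1,p}(\Omega)$ with $p>n$, by \eqref{boundness K}, which gives continuous coefficients and the needed regularity) on a slightly smaller annulus: $\|v\|_{W^{2,p}(A_{1}')} \le C(\|g\|_{L^p(A_1)} + \|v\|_{L^p(A_1)}) \le C(A+1)$. Via Sobolev embedding $W^{2,p}\hookrightarrow C^{1}$ (since $p>n$) one gets $\|v\|_{C^1(A_1')}\le C$. Unrescaling, $|Du(x)| = r^{1-s}|Dv(x/r)| \le C r^{1-s} \le C|x|^{1-s}$ after noting $r\simeq|x|$; this proves \eqref{estimate Du}. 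For \eqref{estimate D2u}, integrate the $W^{2,p}$ bound $\|D^2 v\|_{L^p(A_1')}\le C$ and change variables back: $\int_{r<|x|<2r}|D^2u|^p\,dx = r^{-(s-2)p + n}\int |D^2 v|^p \le C r^{n - (s-2)p}$, i.e. $(\int |D^2 u|^p)^{1/p} \le C r^{n/p - s + 2}$ — and here I should double-check the exponent: with $v(y)=r^{s-2}u(ry)$ we get $D^2 v(y) = r^{s}(D^2 u)(ry)$, so $\int_{A_1'}|D^2v|^p\,dy = r^{sp-n}\int_{r A_1'}|D^2u|^p\,dx$, giving $(\int|D^2u|^p)^{1/p}\le C r^{n/p - s}$ as claimed. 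Good — the exponent in the statement is correct and my choice of rescaling weight $r^{s-2}$ is what makes it work.

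The main obstacle I anticipate is making the interior elliptic estimate for the \emph{system} \eqref{system compact} genuinely applicable: one needs the $W^{2,p}$ Agmon--Douglis--Nirenberg / Campanato-type estimate for strongly elliptic second-order systems with non-smooth (merely $W^{1,p}$, $p>n$, hence $C^{0,1-n/p}$) leading coefficients, on an annular domain, with the constant depending only on the ellipticity bounds, the $W^{1,p}$ norms of the coefficients, $n$ and $p$ — and crucially \emph{not} on $r$. The $r$-independence is automatic because after rescaling the coefficients $K^{hk}(r\,\cdot)$ have $W^{1,p}$ norms on $A_1$ that are bounded by (a fixed power of $r$ times) $\|K^{hk}\|_{W^{1,p}}$, and for $r<R/4$ small this is controlled; if one is worried about the $W^{1,p}$ seminorm scaling unfavorably as $r\to 0$ one notes $\|\nabla_y K^{hk}(ry)\|_{L^p(A_1)} = r^{1-n/p}\|\nabla K^{hk}\|_{L^p(rA_1)} \le r^{1-n/p} C_1$, which is \emph{bounded} (indeed small) since $p>n$. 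So the dependence is benign. A secondary point to be careful about is the zeroth-order matrix $q$ in \eqref{system compact}: after rescaling it becomes $r^2 q(ry)$, which is bounded by \eqref{boundness q}, so it only helps. I would also remark that the same argument, iterated on a chain of overlapping annuli or applied directly on $\{r<|x|<2r\}$ with buffer, handles the full range $0<r<R/2$ for \eqref{estimate Du} and $0<r<R/4$ for \eqref{estimate D2u} as stated (the smaller range for the second estimate being an artifact of needing interior room for the second-derivative estimate). This completes the proof plan; the lemma is essentially a clean scaling reduction to a single a priori estimate on a fixed domain.
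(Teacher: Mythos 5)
Your proposal is correct and follows essentially the same route as the paper: the paper's proof simply invokes \cite[Lemma 3.3]{DoGLN}, which rests on the scaled interior $L^{p}$ estimate \eqref{Schauder Lp} on dyadic annuli with an $r$-independent constant, and your dyadic rescaling to a fixed annulus plus the interior $W^{2,p}$ estimate for the strongly elliptic system (with the Sobolev embedding $W^{2,p}\hookrightarrow C^{1}$, $p>n$, for the gradient bound) is exactly that argument written out. The exponent bookkeeping and the control of the rescaled $W^{1,p}$ coefficient norms are handled correctly, so no gap.
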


\begin{proof}[Proof of Lemma \ref{tech lemma 1}] For a complete proof of this lemma we refer to [Lemma 3.3., \cite{DoGLN}], which is based on on the interior $L^{p}$ - Schauder estimate for uniformly elliptic systems

\begin{equation}\label{Schauder Lp}
\Big(\int_{r<|x|<2r} |D^2 u|^p\Big)^{\frac{1}{p}} \leq C \bigg\{\Big(\int_{\frac{r}{2}<|x|<4r} |L u|^p\Big)^{\frac{1}{p}}
 +  r^{-2} \Big( \int_{\frac{r}{2}<|x|<4r}|u|^p \Big)^{\frac{1}{p}}\bigg\},
\end{equation}



for every $r$, $0<r<\frac{R}{4}$, where $C>0$ depends on $n$, $p$, $\lambda$, $E$, $\mathcal{E}$ and $k$ only.

\end{proof}






 


\begin{lem}\label{tech lemma 2}
Let $f\in L^{p}_{loc}(B_R \setminus\{0\})$ be a complex-valued function that satisfies

\begin{equation}\label{estimate f}
\left(\int_{r<|x|<2r} |f|^p\right)^{\frac{1}{p}}\leq A
r^{\frac{n}{p}-s},\qquad\mbox{for any}\:r,\quad 0<r<\frac{R}{2},
\end{equation}

with $2<s<n<p$. Then there exists $u\in W^{2,p}_{loc}(B_R\setminus\{0\})$ satisfying

\begin{equation}\label{TLu}
Lu=f,\qquad\mbox{in}\quad B_R \setminus\{0\}
\end{equation}

and

\begin{equation}\label{estimate u}
|u(x)|\leq C |x|^{2-s},\quad\mbox{for any}\quad x\in
B_R\setminus\{0\},
\end{equation}

where $C$ is a positive constant depending only on $A$, $s$, $n$, $p$, $R$, $\lambda$, $E$, $\mathcal{E}$ and $k$.
\end{lem}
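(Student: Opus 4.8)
The plan is to construct $u$ as a Newtonian-type potential of $f$ adapted to the operator $L$, built up dyadically from interior solvability on annuli. First I would exhaust the punctured ball by dyadic shells $A_j = \{2^{-j-1}R < |x| < 2^{-j}R\}$, $j \geq 1$, and use a partition of unity $\{\chi_j\}$ subordinate to slightly enlarged shells $\tilde A_j$ so that $f = \sum_j f_j$ with $f_j = \chi_j f$ supported in $\tilde A_j$ and $\|f_j\|_{L^p} \lesssim A\,(2^{-j}R)^{n/p - s}$ by \eqref{estimate f}. On each enlarged shell I would solve a Dirichlet problem $L u_j = f_j$ with zero boundary data on a fixed-geometry domain (a rescaled annulus), which is solvable and yields $W^{2,p}$ estimates by the strong ellipticity \eqref{strong ellipticity} and the interior/global $L^p$-Schauder estimate \eqref{Schauder Lp}; after rescaling to unit size these give $\|u_j\|_{L^\infty} \lesssim (2^{-j}R)^{2}\,\cdot (2^{-j}R)^{n/p-s-n/p} \cdot (\text{scaling factors})$, i.e. the natural bound $|u_j(x)| \lesssim A\,(2^{-j}R)^{2-s}$ on its shell, together with matching gradient and $W^{2,p}$ bounds. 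The key scaling bookkeeping is that $L$ has lower-order terms, so one rescales $v(y) = u_j(2^{-j}R\, y)$ and absorbs the zeroth-order coefficient into a harmless perturbation — legitimate because the shells are uniformly away from the origin relative to their own size.

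Then I would set $u = \sum_{j \geq 1} u_j$ and check convergence and the pointwise bound \eqref{estimate u}. The point is that at a fixed $x$ with $|x| \sim 2^{-j_0}R$, only finitely many nearby shells' solutions are "large", and because $2 - s < 0$ the geometric series $\sum_j (2^{-j}R)^{2-s}$ is dominated by its largest term, namely the one with $2^{-j}R \sim |x|$; hence $|u(x)| \leq C|x|^{2-s}$. Convergence in $W^{2,p}_{loc}(B_R \setminus \{0\})$ away from the origin is immediate since on any compact subset only finitely many $u_j$ are nonzero. One must also verify $Lu = f$ in the weak/strong sense on the punctured ball: this follows from the locally finite decomposition — near any point bounded away from $0$, $u$ equals a finite sum of the $u_j$ plus a tail that is $L$-harmonic there (it solves $Lw=0$ since the remaining $f_j$ vanish), so $Lu = \sum_{j} L u_j = \sum_j f_j = f$.

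The main obstacle I expect is the uniform-in-$j$ control of the solution operator for $L u_j = f_j$ on the shells, including the lower-order term $qu$: one needs the Dirichlet problem on each shell to be not just solvable but to satisfy an $L^p$ estimate with a constant independent of the scale $2^{-j}R$. Because $q$ is only bounded (not small), well-posedness of the Dirichlet problem on a shell is not automatic at all scales; the fix is the rescaling argument, under which $L$ becomes $-\operatorname{div}(C(2^{-j}R\,y)\nabla \cdot) + (2^{-j}R)^2 q(2^{-j}R\,y)$ on a unit-size annulus, where the zeroth-order coefficient is now of size $(2^{-j}R)^2 \ll 1$, so the operator is a small perturbation of the purely second-order strongly elliptic operator and the estimate \eqref{Schauder Lp} applies with a uniform constant (alternatively, one cites the fixed-geometry $W^{2,p}$ solvability on an annulus from the strong ellipticity \eqref{strong ellipticity} directly, as is done in \cite{A1}). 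A secondary technical point is keeping the overlaps of the enlarged shells $\tilde A_j$ bounded so that the partition of unity has derivative bounds $|\nabla \chi_j| \lesssim 2^{j}/R$ and the reassembled $f = \sum f_j$ genuinely recovers $f$; this is routine once the dyadic structure is fixed.
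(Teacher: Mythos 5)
There is a genuine gap at the step where you glue the shell solutions together and claim $Lu=f$. Each $u_j$ is produced as the solution of a Dirichlet problem on the enlarged shell $\tilde A_j$ with zero boundary data, and to form $u=\sum_j u_j$ on $B_R\setminus\{0\}$ you must extend each $u_j$ by zero outside $\tilde A_j$. That zero extension is \emph{not} a solution of $L(\cdot)=f_j$ across $\partial\tilde A_j$: $u_j$ vanishes there but its conormal derivative does not, so in the distributional sense $L$ applied to the extension equals $f_j$ plus a surface term (a measure supported on $\partial\tilde A_j$), and these boundaries lie inside the punctured ball. Consequently the assertion that, near a point away from the origin, the ``tail'' of the series solves $Lw=0$ is false -- the nearby extended $u_j$'s are not $L$-solutions across their shell boundaries -- and the assembled $u$ satisfies $Lu=f$ plus a sum of surface distributions, not \eqref{TLu}. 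Note also that the disjoint-support structure is exactly what made your pointwise bound look easy; once the construction is repaired, that part of the argument changes too.

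The standard repair, and essentially what the proof cited by the paper (Lemma 3.4 of \cite{DoGLN}, following Lemma 2.2 of \cite{A1}) does, is global rather than shell-local: either solve $Lu_j=f_j$ in all of $B_R$ (zero data on $\partial B_R$ only, which is legitimate since the system is coercive, $q$ being positive definite), or equivalently represent $u(x)=\int_{B_R}G(x,y)f(y)\,dy$ with the Green function (Green matrix, for the system) satisfying $|G(x,y)|\leq C|x-y|^{2-n}$, available because the coefficients are H\"older continuous. Then $Lu=f$ holds by construction, and the estimate \eqref{estimate u} comes from splitting the integral dyadically: shells with $|y|\lesssim|x|$ contribute $\lesssim |x|^{2-n}\sum_{2^{-j}R\lesssim|x|}(2^{-j}R)^{n-s}\lesssim|x|^{2-s}$ (this uses $s<n$), while shells with $|y|\gtrsim|x|$ contribute $\lesssim\sum_{2^{-j}R\gtrsim|x|}(2^{-j}R)^{2-s}\lesssim|x|^{2-s}$ (this uses $s>2$). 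In your write-up the hypothesis $2<s<n$ is never genuinely used on both sides, which is a symptom of the missing decay-away-from-the-support estimates that the corrected construction requires; your rescaling remark about the zeroth-order term is fine but does not address this central point.
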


\begin{proof}[Proof of Lemma \ref{tech lemma 2}] See [Lemma 3.4 \cite{DoGLN}] for a proof of this lemma.

\end{proof}

Let $L_0$ be the operator defined by 

\begin{equation}\label{operator L0}
L_0:=- \mbox{div}\left(K(0)\nabla\cdot\right),\qquad\textnormal{in}\quad B_R.
\end{equation}

Note that if we assume that $B(0)=0$, then

\begin{equation}\label{KSuppB}
K(0) =  \frac{1}{n(\mu_a (0) + \mu_s (0) - ik)} I
\end{equation}

and $L_0$ simplifies to

\begin{equation}\label{L0SuppB}
L_0 = \frac{1}{n(\mu_a (0) + \mu_s (0) - ik)} \Delta.
\end{equation}

\begin{lem}\label{tech lemma 3}
Let $f\in L^{p}_{loc}(B_R \setminus\{0\})$ be a complex-valued function that satisfies

\begin{equation}\label{estimate f}
\left(\int_{r<|x|<2r} |f|^p\right)^{\frac{1}{p}}\leq A
r^{\frac{n}{p}-s},\qquad\mbox{for any}\:\;r,\quad 0<r<\frac{R}{2},
\end{equation}

where $s>n$ is a non-integral real number and $A$ a positive constant. Then there exists $u\in W^{2,p}_{loc}(B_R\setminus\{0\})$ satisfying

\begin{equation}\label{TLu}
\Delta u=f,\qquad\mbox{in}\quad B_R \setminus\{0\}
\end{equation}

and

\begin{equation}\label{estimate u}
|u(x)|\leq C |x|^{2-s},\quad\mbox{for any}\quad x\in
B_R\setminus\{0\},
\end{equation}

where $C$ is a positive constant depending only on $A$, $s$, $n$, $p$, $R$, $\lambda$, $E$, $\mathcal{E}$ and $k$.
\end{lem}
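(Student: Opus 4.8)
The plan is to prove Lemma \ref{tech lemma 3} by decomposing $f$ into pieces supported on the dyadic annuli $A_j = \{2^{-j-1}R < |x| < 2^{-j}R\}$ (as well as the complementary piece away from the origin, which is harmless), solving $\Delta u_j = f_j$ on $\mathbb{R}^n$ for each piece using the standard Newtonian potential, and then summing. Concretely, write $f = \sum_{j\geq j_0} f_j$ where $f_j = f\chi_{A_j}$, set $u_j = \Gamma * f_j$ with $\Gamma$ the fundamental solution of the Laplacian, and put $u = \sum_j u_j$. The hypothesis $\left(\int_{A_j} |f|^p\right)^{1/p} \leq C\, 2^{-j(n/p - s)}$ controls the size of each $f_j$, and classical Calder\'on--Zygmund / Newtonian potential estimates give pointwise bounds on $u_j$ and its derivatives in terms of $\|f_j\|_{L^p}$ and the distance to the annulus $A_j$.

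The key steps, in order, are the following. First, record the pointwise estimate for the Newtonian potential of an $L^p$ function supported in an annulus of radius $\sim \rho$: away from that annulus, at distance $d$ from it, one has $|u_j(x)| \lesssim \rho^{n}\rho^{-n/p}\|f_j\|_{L^p}\cdot(\text{decay in } d)$, obtained by H\"older's inequality applied to $\int \Gamma(x-y) f_j(y)\,dy$ together with the explicit size of $\int_{A_j}|\Gamma(x-y)|^{p'}dy$; inside or near $A_j$ one uses interior $L^p$ elliptic estimates as in \eqref{Schauder Lp} (with $K = I$). Second, fix $x$ with $|x| = t$ and split the sum over $j$ into the annuli with $2^{-j}R \gg t$ (the singularity is "below" $x$), those with $2^{-j}R \ll t$ ("above"), and the $O(1)$ annuli comparable to $t$. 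In each regime the contribution is a geometric-type series in $2^{-j}$; because $s > n$ is non-integral and in particular $s \neq 2$, $s \neq n$, none of the exponents appearing in the geometric series vanish, so the series converge and sum to a multiple of $|x|^{2-s}$. This gives \eqref{estimate u}. Third, the same splitting applied to $\int_{r<|x|<2r}|D^2 u|^p$ via \eqref{Schauder Lp} yields the companion $W^{2,p}$ bound (needed later when this lemma is fed into the proof of Theorem \ref{theor singular sol}), and in particular $u \in W^{2,p}_{loc}(B_R\setminus\{0\})$ with $\Delta u = f$ there.

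I expect the main obstacle to be the bookkeeping in the summation over dyadic scales: one must verify that the exponent arithmetic in each of the three regimes produces exactly the power $2-s$ and no logarithmic losses, which is precisely where the hypothesis that $s$ is \emph{non-integral} (and $s > n > 2$) is used — it guarantees $2 - s \notin \{0\}$ and that the borderline resonant cases of the Newtonian potential scaling are avoided. A secondary technical point is handling the outermost region $|x| \sim R$ and the fact that $f$ need only be in $L^p_{loc}(B_R\setminus\{0\})$: one restricts attention to, say, $B_{R/2}$, absorbs the contribution of $f$ on $\{R/2 < |x| < R\}$ into a smooth harmless term (or simply notes that the estimate is only claimed with a constant depending on $R$), and only the behaviour as $|x|\to 0$ requires the delicate dyadic argument. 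This is the exact analogue, for the constant-coefficient Laplacian, of the construction in \cite{A1} and its complex-coefficient refinement in \cite{DoGLN}; the difference from Lemma \ref{tech lemma 2} is that here $s > n$ rather than $2 < s < n$, so the potential-theoretic estimates produce the improved decay $|x|^{2-s}$ rather than mere boundedness, at the cost of requiring $s \notin \mathbb{Z}$.
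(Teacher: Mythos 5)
There is a genuine gap, and it sits exactly where you wave at "geometric-type series" bookkeeping. For $s>n$ the hypothesis \eqref{estimate f} only gives, on the dyadic annulus $A_j$ of radius $\rho_j=2^{-j}R$, the bound $\|f_j\|_{L^1(A_j)}\lesssim \rho_j^{\,n-n/p}\|f_j\|_{L^p}\lesssim \rho_j^{\,n-s}$ with $n-s<0$; in other words $f$ need not be locally integrable at the origin. Consequently your superposition $u=\sum_j \Gamma * f_j$ does not converge in the regime of annuli with $\rho_j\ll |x|$: for such $j$ one has $|u_j(x)|\approx |x|^{2-n}\bigl|\int f_j\bigr|$, and since no cancellation in $\int f_j$ is available (f is an arbitrary complex-valued function), this is in general of size $|x|^{2-n}\rho_j^{\,n-s}$, whose sum over dyadic $\rho_j\le |x|/2$ diverges because the exponent $n-s$ is negative. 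The non-integrality of $s$ cannot rescue this: it only ensures that the exponents $j-s+n$ never vanish (no logarithmic borderline terms) and dictates where to truncate an expansion; it does not change the sign of the exponent, so no amount of exponent arithmetic produces $|x|^{2-s}$ from the plain Newtonian potential. This is also why the paper can quote Lemma \ref{tech lemma 2} (where $2<s<n$, so $f\in L^1$ near the origin and your construction is essentially correct) from \cite{DoGLN}, but gives a separate proof for $s>n$.

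The missing idea is the renormalization of the kernel used in the paper, following \cite{A1}: expand $\Gamma(x-y)$ for $|y|<|x|$ in Gegenbauer polynomials and subtract the first $\nu+1$ terms, $\nu=[s]-n$, obtaining $\Gamma_\nu(x,y)$ as in \eqref{Gamma expansion truncated}. Each subtracted term is harmonic in $x$ away from the origin, so $\Delta_x\Gamma_\nu(x,y)=\delta(x-y)$ for $x\neq 0$ still, and $u(x)=\int_{B_R}\Gamma_\nu(x,y)f(y)\,dy$ solves $\Delta u=f$ in $B_R\setminus\{0\}$. The gain is that $\Gamma_\nu(x,y)$ vanishes to order $\nu+1$ in $y$ at $y=0$, so the contribution of the region $|y|<|x|/2$ is controlled by integrals $\int_{|y|<|x|/2}|y|^{j-s}\,dy$ with $j\ge \nu+1$, i.e. $j-s+n>0$, which converge and sum to $C|x|^{2-s}$; the region $|y|>|x|/2$ is handled by the unmodified kernel together with the finitely many subtracted polynomial terms, for which $j-s+n<0$. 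If you wish to keep your dyadic decomposition, you must correct each inner piece $u_j$ by the moments of $f_j$ against the degree-$\nu$ Taylor (multipole) expansion of $\Gamma(x-\cdot)$ at the origin — which is the same renormalization in different clothing; without it the construction fails at the first step, before any estimate of type \eqref{estimate u} can be attempted.
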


\begin{proof}
The proof of this lemma is a straightforward adaptation of the argument of \cite[Proof of Lemma 2.3]{A1} to the case of $f$ complex-valued function.  We provide the proof for the sake of completeness. 

Letting $\Gamma (x-y) = - C_n \vert x - y \vert^{2-n}$ be the fundamental solution for the Laplace operator in $\mathbb{R}^n$, we have that for $|y|<|x|$

\begin{equation}\label{Gamma expansion}
\Gamma (x-y) = -  C_n\sum_{j=0}^{\infty} \frac{\vert y \vert^j}{\vert x \vert^{j+n-2}}C_j^{\frac{n-2}{2}}\Big( \frac{x}{\vert x \vert} \cdot \frac{y}{\vert y \vert}\Big),
\end{equation}

where $C_j^{\frac{n-2}{2}}$ are the Gegenbauer polynomials and $|C_j^{\frac{n-2}{2}}(t)|\leq \textnormal{Const}\: j^{n-3}$, for $|t|\leq 1$. Defining for $\nu = 0, 1, 2, \dots $

\begin{equation}\label{Gamma expansion truncated}
\Gamma_{\nu}(x-y) = \Gamma(x-y) -  C_n\sum_{j=0}^{\nu} \frac{\vert y \vert^j}{\vert x \vert^{j+n-2}}C_j^{\frac{n-2}{2}}\Big( \frac{x}{\vert x \vert} \cdot \frac{y}{\vert y \vert}\Big),
\end{equation}

we have that 
 
\begin{equation}
\Delta_x \Gamma_{\nu} (x, y) = \delta(x - y),\qquad\textnormal{for}\quad x\neq 0. 
\end{equation}

Assuming without loss of generality (by means of a cut-off argument) that $f \in L^{\infty} (B_R)$ and setting
 
\begin{equation}
u(x) = \int_{B_R} \Gamma_{\nu}(x,y) f(y) dy,
\end{equation}

with $\nu = [s] -n$, we have that

\begin{equation}
\Delta u = f,\qquad\textnormal{in}\quad B_R\setminus\{0\}.
\end{equation}

To prove \eqref{estimate u}, setting $P_j(x,y):= \frac{|y|^j}{|x|^{j+n-2}}\:C_{j}^{\frac{n-2}{2}}\left(\frac{x}{|x|}\cdot \frac{y}{|y|}\right)$, we write

\begin{eqnarray}\label{integral 1}
\int_{B_R} \Gamma_{\nu}(x-y) f(y) dy &=& \int_{\left\{\frac{|x|}{2}<|y|<R\right\}} \Gamma(x-y) f(y) dy\nonumber\\
&-& \sum_{j=0}^{\nu} \int_{\left\{\frac{|x|}{2}<|y|<R\right\}} P_j(x,y) f(y) dy\nonumber\\
&+&  \sum_{j=\nu +1}^{\infty} \int_{\left\{|y|<\frac{|x|}{2}\right\}} P_j(x,y) f(y) dy.
\end{eqnarray}

The first integral on the right hand side of \eqref{integral 1} can be estimated as follows,

\begin{equation}\label{int 1}
\left|\int_{\left\{\frac{|x|}{2}<|y|<R\right\}} \Gamma(x-y) f(y) dy\right|\leq C \int_{\left\{\frac{|x|}{2}<|y|<R\right\}} |x-y|^{2-n} \: |f(y)| dy \leq C|x|^{2-s}.
\end{equation}

To estimate the second and third integrals on the right hand side of \eqref{integral 1}, we will make use of inequality

\begin{equation}\label{holder inequality}
\int_{\left\{r<|y|<2r\right\}} |y|^{\mu}\:|f(y)| dy \leq C(\mu,s,n,A) \int_{\left\{r<|y|<2r\right\}} |y|^{\mu -s} dy,
\end{equation}

which holds true for any $f$ complex-valued function satisfying \eqref{estimate f}, hence the second integral on the right hand side of \eqref{integral 1} can be estimated by

\begin{equation}\label{integral 2}
\left| \sum_{j=0}^{\nu} \int_{\left\{\frac{|x|}{2}<|y|<R\right\}} P_j(x,y) f(y) dy\right|  \leq C \sum_{j=0}^{\nu} j^{n-3} \int_{\left\{\frac{|x|}{2}<|y|<R\right\}} \frac{|y|^j}{|x|^{j+n-2}} |f(y)| dy,
\end{equation}

where $C$ is a positive constant depending only on $n$, $\lambda$, $\mathcal{E}$, $k$ and $s$. Extending $f$ outside $B_R$ by setting $f=0$ on $\mathbb{R}^n\setminus B_R$ and noting that $j-s+n<0,\textnormal{for}\:j=0,\dots,\nu,$

we have

\begin{eqnarray}\label{integral 3}
\left| \sum_{j=0}^{\nu} \int_{\left\{\frac{|x|}{2}<|y|<R\right\}} P_j(x,y) f(y) dy\right| & \leq & C  \sum_{j=0}^{\nu} j^{n-3} |x|^{2-n-j}\int_{\left\{|y|>\frac{|x|}{2}\right\}} |y|^{j-s} dy\nonumber\\
& \leq & C |x|^{2-s},
\end{eqnarray}

where $C$ is a positive constant depending only on $n$, $\lambda$, $\mathcal{E}$, $k$ and $s$. Similarly, for the third integral on the right hand side of \eqref{integral 1}, noticing that $j-s+n>0,\textnormal{for}\:j>\nu,$

we have

\begin{eqnarray}
\left| \sum_{j=\nu +1}^{\infty} \int_{\left\{|y|<\frac{|x|}{2}\right\}} P_j(x,y) f(y) dy \right| &\leq & C \sum_{j=\nu +1}^{\infty}  j^{n-3} |x|^{2-n-j} \int_{\left\{|y|<\frac{|x|}{2}\right\}} |y|^{j-s} dy\nonumber\\
&\leq & C |x|^{2-s} \sum_{j=\nu +1}^{\infty}  j^{n-4}2^{-j}\leq C |x|^{2-s},
\end{eqnarray}

where $C$ is a positive constant depending only on $n$, $\lambda$, $\mathcal{E}$, $k$ and $s$.
\end{proof}


Next we proceed with the proof of Theorem \ref{theor singular sol}. 

\begin{proof}[Proof of Theorem \ref{theor singular sol}] We start by considering the fundamental solution to

\begin{equation}\label{L K(z)}
L_{K(z)}:=\mbox{div}\left(K(z)\nabla\cdot\right)
\end{equation}

in $\mathbb{R}^n$ with pole at $y$,

\[u_0(x):=\Gamma_{K(z)}(x-y)=\Big(K^{-1}(z) (x-y)\cdot (x-y)\Big)^{\frac{2-n}{2}},\qquad\textnormal{in}\quad B_R(z)\setminus\{z\}.\]

By an induction argument, we have that for $m=0,1,2,\dots$,

\begin{eqnarray}\label{partial derivative H}
u_m := \frac{\partial ^{m} u_0}{\partial y_n ^m} \bigg\vert_{y=z} &= &\bigg(K^{-1}(z)(x-z)\cdot (x-z)\bigg)^{\frac{2-n-m}{2}}\nonumber\\
& \times & \sum_{j=0}^{\lfloor\frac{m}{2}\rfloor} \left(\prod_{k=0}^{m-j-1} \left(\frac{2-n}{2} - k\right)\right) (2 K_{nn}^{-1}(z))\nonumber\\
& \times & \frac{m!}{2^j(m-2j)! j!} \frac{\left(-2K_{(n)}^{-1}(z) (x-z)\right)^{m-2j}}{\bigg(K^{-1}(z)(x-z)\cdot (x-z)\bigg)^{\frac{m-2j}{2}}}
\end{eqnarray}

which leads to

\begin{equation}\label{um1}
u_m = \bigg(K^{-1}(z)(x-z)\cdot (x-z)\bigg)^{\frac{2-n-m}{2}}m! \left(K_{nn}^{-1}(z)\right)^{\frac{m}{2}} C_{m}^{\frac{n-2}{2}}\left[\frac{K_{(n)}^{-1}(z) (x-z)}{\Big(K_{nn}^{-1}(z)K^{-1}(z)(x-z)\cdot (x-z)\Big)^{\frac{1}{2}}}\right]
\end{equation}



where


\begin{equation}\label{Gegenbauer complex}
C_{m}^{\frac{n-2}{2}}(\widetilde{z}) =  \sum_{j=0}^{\lfloor\frac{m}{2}\rfloor}  (-1)^j \frac{\Gamma\left(m-j + \frac{n-2}{2}\right)}{\Gamma\left(\frac{n-2}{2}\right) j! (m-2j)!} (2\widetilde{z})^{m-2j}
\end{equation}

is the Gegenbauer polynomial of order $m$ and degree $\frac{n-2}{2}$ in the complex variable $\widetilde{z}$.




Observing that for any $m=1,2,\dots$

\begin{equation}\label{partial derivative H solution}
L_{K(z)} u_m = 0, \qquad\textnormal{in}\quad B_R(z)\setminus\{z\},
\end{equation}

we aim to find a solution $w$ to

\begin{equation}\label{solution Lq}
Lw = - Lu_m, \qquad\textnormal{in}\quad B_R(z)\setminus\{z\},
\end{equation}

satisfying \eqref{w stima lipschitz}, \eqref{w stima int}, where $L$ is defined by \eqref{L complex}. We have

\begin{equation}\label{Lq H}
-Lu_m  = \Big(K_{ij} (x) - K_{ij}(z)\Big)\:\frac{\partial^{2}u_m}{\partial x_i \partial
x_j} + \frac{\partial K_{ij}}{\partial x_i}\:\frac{\partial
u_m}{\partial x_j} - qu_m
\end{equation}

and recalling that for any $i,j =1,\dots , n$, $K_{ij}\in W^{1,p}(B_R (z))$, with $p>n$, we have that $K_{ij}$ is H\"older continuous on $\overline{B_R (z)}$ with H\"older coefficient $\beta= 1 -\frac{n}{p}$ and hence

\begin{eqnarray}
\bigg(\int_{r<|x-z|<2r} |L u_m| ^p \bigg)^{\frac{1}{p}} 
& \leq & \bigg(\int_{r<|x-z|<2r}
|x-z|^{\beta p}\:|x-z|^{-(n+m)p}\bigg)^{\frac{1}{p}}\nonumber\\
& + & \bigg(\int_{r<|x-z|<2r} \left|\frac{\partial K_{ij}}{\partial
x_i}\right|^p\:|x-z|^{(1-n-m)p}\bigg)^{\frac{1}{p}}\nonumber\\
& + & \bigg(\lambda\int_{r<|x-z|<2r} |x-z|^{(2-n-m)p} \bigg)^{\frac{1}{p}} \nonumber\\
& \leq & C r^{\frac{n}{p} -(n+m-\beta)},
\end{eqnarray}

where $C$ is a positive constant depending on $n$, $p$, $R$, $\lambda$, $E$, $\mathcal{E}$ and $k$. Since $n+m-\beta>n$, by Lemma \ref{tech lemma 3} there is $w_0\in W^{2,p}_{loc}(B_R(z)\setminus\{z\})$ solution to $L_{K(z)}w_0= -Lu_m$, satisfying

\begin{equation}\label{estimate w0}
|w_0(x)|\leq C |x-z|^{2-(n+m-\beta)},\quad\textnormal{for\: any}\:x\in B_R(z)\setminus\{z\}.
\end{equation}

Notice that

\[L w_0 = (L-L_{K(z)}) w_0 - Lu_m,\]

and that by combining \eqref{estimate w0} together with Lemma \ref{tech lemma 1}, we obtain

\begin{eqnarray}
\bigg(\int_{r<|x-z|<2r} |(L_{K(z)} - L)w_0| ^p \bigg)^{\frac{1}{p}}  
& \leq & \bigg(\int_{r<|x-z|<2r}
|x-y|^{\beta p}\:|x-z|^{-(n+m)p}\bigg)^{\frac{1}{p}}\nonumber\\
& + & \bigg(\int_{r<|x-z|<2r} \left|\frac{\partial K_{ij}}{\partial
x_i}\right|^p\:|x-y|^{(1-n-m)p}\bigg)^{\frac{1}{p}}\nonumber\\
& + & \bigg(\lambda\int_{r<|x-z|<2r} |x-y|^{(2-n-m)p} \bigg)^{\frac{1}{p}} \nonumber\\
& \leq & C r^{\frac{n}{p} -(n+m-2\beta)}.
\end{eqnarray}

Letting $\alpha$ be an irrational number, with $0<\alpha<\beta$ and setting $J=\lfloor \frac{m}{\alpha}\rfloor$, note that $n+m -j \alpha >n$, for $j = 1,\dots ,J$; $n+m-(J+1)\alpha < n$ and $n+m-(J+1)\alpha$ is a non-integral real number. Defining, for $j=1,\dots , J-1$, $w_j$ to be the solution to

\[L_{K(z)} w_j = (L_{K(z)} - L)w_{j-1}\qquad\textnormal{in}\quad B_R(z)\setminus\{z\},\]

given by Lemma \ref{tech lemma 3}, we have

\begin{eqnarray}
& &\!\!\!\!\!\!\!\!\!\!\! |w_j (x)|\leq C |x-z| ^{2-n-m + (j+1)\alpha},\quad\textnormal{for\:any}\:x,\: x\in B_R(z)\setminus\{z\},\label{estimate wj}\\
& &\!\!\!\!\!\!\!\!\!\!\!\!\!\!\!\!\bigg(\!\!\int_{r<|x-z|<2r} \!\!\!\!\!\!\!\!\!\!\!\!\!\!|(L_{K(z)} - L)w_j| ^p \bigg)^{\frac{1}{p}}\!\!\!\leq C r^{\frac{n}{p} -n-m + (j+2)\alpha},\quad\textnormal{for\:any}\:r,\: 0<r<\frac{R}{2}.\label{estimate Lp wj}
\end{eqnarray}

For $j=J$ define $W_J$ to be the solution to

\[L W_J = (L_{K(z)} - L)w_{J-1}\qquad\textnormal{in}\quad B_R(z)\setminus\{z\}\]

given by Lemma \ref{tech lemma 2}, which satisfies

\begin{equation}\label{estimate WJ}
|W_J(x)|\leq C |x-z|^{2-n-m+(J+1)\alpha},\quad\textnormal{for\: any}\:x\in B_R(z)\setminus\{z\}.
\end{equation}

Finally, forming

\begin{equation}\label{w}
w = \sum_{j=0}^{J-1} w_j + W_J,
\end{equation}

we have that

\begin{equation}\label{w solution}
Lw = \sum_{j=0}^{J-1} (L - L_{K(z)}) w_j + \sum_{j=0}^{J-1} L_{K(z)} w_{j} + (L_{K(z)} - L) w_{J-1}  = - Lu_m
\end{equation}

and

\begin{equation}\label{w estimate}
|w(x)| \leq C \sum_{j=0}^{J} |x-y| ^{2-n-m +(j+1)\alpha}\leq C |x-y|^{2-n-m+\alpha},\qquad\textnormal{in}\quad B_R(z)\setminus\{z\},
\end{equation}

where $C>0$ is a constant depending on $n$, $p$, $R$, $\lambda$, $E$, $\mathcal{E}$ and $k$.  Properties \eqref{w stima lipschitz}, \eqref{w stima int} follow from Lemma \ref{tech lemma 1}. 

\end{proof}

We shall also need the following lemma. We set again $z=0$ to simply the notation.

\begin{lem}\label{lemma grad sing sol}
Let the hypotheses of Theorem \ref{theor singular sol} be satisfied. Then, for any $m=0,1,2,\dots$, the singular solution $u$ constructed in Theorem \ref{theor singular sol} on $B_R$ and having an isolated singularity at $z=0$ also satisfies

\begin{equation}\label{sing sol est 2}
|Du(x)|> C |x|^{1-(n+m)},\quad\mbox{for every}\: x\in\Omega,\qquad 0<|x|<r_{0},
\end{equation}

where $C>0$ and $r_0>0$ are constants depending on $\lambda$, $\mathcal{E}$, $E$, $m$, $\Omega$, $n$, $p$ and $k$.
\end{lem}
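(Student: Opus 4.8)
The plan is to isolate the explicit leading singularity and to show that it alone forces the gradient to be large, while the remainder $w$ is of strictly lower singular order. By Remark \ref{remark sing sol simplified}, since $B(0)=0$ and $z=0$, the singular solution splits as $u = c_0\,V + w$, where
$$c_0 = m!\,(\mu_a(0)+\mu_s(0)-ik)^{\frac{2-n}{2}},\qquad V(x)=|x|^{2-n-m}\,C_m^{\frac{n-2}{2}}\!\Big(\tfrac{x_n}{|x|}\Big),$$
and $w$ obeys \eqref{w stima lipschitz}, which in particular yields $|Dw(x)|\le C|x|^{1-n+\alpha}$. The function $V$ is real-valued and homogeneous of degree $d:=2-n-m$ (up to a real constant it is $\partial^m_{x_n}|x|^{2-n}$), and its restriction to the unit sphere is the zonal profile $\theta\mapsto C_m^{\frac{n-2}{2}}(\theta_n)$. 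The coefficient $c_0$ is a nonzero complex number whose modulus is controlled from below by the a-priori data: since $\mu_a(0)+\mu_s(0)\in[2\lambda^{-1},2\lambda]$ is real and positive, $|\mu_a(0)+\mu_s(0)-ik|\le 2\lambda+k$, whence $|c_0|\ge m!\,(2\lambda+k)^{\frac{2-n}{2}}>0$ (recall $\tfrac{2-n}{2}<0$).

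First I would prove the pointwise lower bound for the leading term, $|DV(x)|\ge c_1|x|^{1-n-m}$ for all $x\neq 0$, with $c_1>0$ depending only on $n,m$. By homogeneity it suffices to bound $|\nabla V|$ from below on the unit sphere $S^{n-1}$. Decomposing the gradient into radial and tangential parts and using Euler's identity $x\cdot\nabla V=dV$, one gets on $S^{n-1}$
$$|\nabla V|^2 = d^2 V^2 + |\nabla_{S} V|^2,$$
where $\nabla_S$ denotes the spherical gradient and $|\nabla_S V|=|(C_m^{\frac{n-2}{2}})'(\theta_n)|\sqrt{1-\theta_n^2}$. The crucial point is that the two terms cannot vanish simultaneously. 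Where $V\neq0$ the radial term $d^2V^2$ is positive because $d=2-n-m\neq0$. Where $V=0$ we have $C_m^{\frac{n-2}{2}}(\theta_n)=0$; since the Gegenbauer polynomial has only simple zeros, all lying in $(-1,1)$, its derivative is nonzero there, and $\theta_n\in(-1,1)$ makes the latitude factor $\sqrt{1-\theta_n^2}$ positive, so $\nabla_S V\neq0$. Hence $|\nabla V|^2$ is continuous and strictly positive on the compact sphere, and $c_1:=\inf_{S^{n-1}}|\nabla V|>0$.

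Then I would combine the estimates by the triangle inequality:
$$|Du(x)|\ge |c_0|\,|DV(x)| - |Dw(x)| \ge |c_0|\,c_1\,|x|^{1-n-m} - C|x|^{1-n+\alpha}.$$
Since $\alpha>0$, the exponents satisfy $1-n+\alpha>1-n-m$ for every $m\ge0$, so the remainder is of strictly lower singular order. Choosing $r_0$ so that $C|x|^{\alpha+m}\le\tfrac12|c_0|c_1$ for $0<|x|<r_0$, that is $r_0=\min\{R,(|c_0|c_1/2C)^{1/(\alpha+m)}\}$, gives $|Du(x)|\ge\tfrac12|c_0|c_1|x|^{1-n-m}$ on $B_{r_0}\setminus\{0\}$, hence a fortiori on $\Omega\cap(B_{r_0}\setminus\{0\})$, which is \eqref{sing sol est 2} after absorbing constants. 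Tracking $|c_0|$, $c_1$, $C$ and $r_0$ through the constants of Theorem \ref{theor singular sol} gives the stated dependence on $\lambda,\mathcal{E},E,m,\Omega,n,p,k$.

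The main obstacle is the middle step: establishing that the gradient of the leading zonal term is bounded away from zero uniformly over \emph{all} directions, not merely along the axis or within a cone. The crux is the disjointness of the nodal set of the zonal harmonic from the critical set of its restriction to the sphere, which I would secure through the classical simplicity of the real zeros of $C_m^{\frac{n-2}{2}}$ in $(-1,1)$; this is exactly what upgrades a one-directional estimate to the genuinely pointwise bound asserted in the lemma.
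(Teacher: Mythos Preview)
Your proof is correct and follows essentially the same approach as the paper: both reduce to showing that $(2-n-m)^2\big[C_m^{\frac{n-2}{2}}(t)\big]^2 + \big[(C_m^{\frac{n-2}{2}})'(t)\big]^2(1-t^2)$ is bounded away from zero for $|t|\le 1$, and then absorb the lower-order remainder $Dw$. The only minor variation is in the non-vanishing step: you invoke the simplicity of the zeros of orthogonal polynomials in $(-1,1)$, whereas the paper appeals to the Gegenbauer ODE $(t^2-1)w''+2t(n-1)w'-m(m+n-2)w=0$ together with Cauchy uniqueness (and the fact $C_m^{\frac{n-2}{2}}(\pm1)\neq0$) to rule out simultaneous vanishing of $C_m^{\frac{n-2}{2}}$ and its derivative.
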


\begin{proof}
Recall that, for $m=0,1,2,\dots$, the singular solution of Theorem \ref{theor singular sol} with singularity at $z=0$ is given by

\begin{equation}\label{sing sol lemma}
u(x) = C \left(K^{-1}(0)x\cdot x\right)^{\frac{2-n-m}{2}}\: C_m^{\frac{n-2}{2}}\left(\frac{K_{(n)}^{-1}(0)x}{\left(K_{nn}^{-1}(0) K^{-1}(0)x\cdot x\right)^{\frac{1}{2}}}\right) + w(x),
\end{equation}

where $C= m! \left[(K^{-1}(0)_{nn}\right]^{\frac{m}{2}}$, $K_{(n)}^{-1}(0)$ and  $K_{nn}^{-1}(0)$ denote the last row and the last entry in the last row of matrix 

\begin{equation}\label{K inverse at 0}
K^{-1}(0) = n \Big(\mu_a(0) I + (I-B(0))\mu_s(0) - ik I\Big),
\end{equation}

$C_m^{\frac{n-2}{2}}(\widetilde{z})$ is the Gegenbauer polynomial on the complex plane and $w$ satisfies \eqref{w stima lipschitz} and \eqref{w stima int}. Recalling that under the assumption that $B(0) = 0$, \eqref{sing sol lemma} simplifies to (see lemma \ref{remark sing sol simplified})

\begin{equation}\label{sing sol simplified}
u(x) = m! \left(\mu_a(z) + \mu_s(z) - ik\right)^{\frac{2-n}{2}}\: |x|^{2-n-m}C_m^{\frac{n-2}{2}}\left(\frac{x_n}{|x|}\right) + w(x),
\end{equation}

to prove \eqref{sing sol est 2}, it is enough to show that

\begin{equation}\label{norm squared}
\left | D\left[ m! \left(\mu_a(z) + \mu_s(z) - ik\right)^{\frac{2-n}{2}}\: |x|^{2-n-m} C_m^{\frac{n-2}{2}}\left(\frac{x_n}{|x|}\right) \right]\right |^2 >C |x|^{2 - 2(n+m)},
\end{equation}

where $C>0$ is a constant depending on $\lambda$, $\mathcal{E}$, $E$, $k$, $\Omega$, $n$ and $m$. Setting $\frac{x_n}{|x|}:=t$ and recalling  \eqref{assumption on scattering coeff positive}, \eqref{limitazioni per a,b}, we obtain




\begin{eqnarray}\label{norm squared 2}
& & \left | D\left[ m! \left(\mu_a(z) + \mu_s(z) - ik\right)^{\frac{2-n}{2}}\: |x|^{2-n-m} C_m^{\frac{n-2}{2}}\left(\frac{x_n}{|x|}\right) \right]\right |^2\nonumber\\
&\geq & C |x|^{2-2(n+m)}\left\{ (2-n-m)^2 \left(C_m^{\frac{n-2}{2}}\left(t\right)\right)^2 + \left(\frac{d C_m^{\frac{n-2}{2}}}{dt} (t)\right)^2 (1-t^2) \right\}.
\end{eqnarray}

Recalling as in \cite[Proof of Lemma 3.1]{A1} that $C_m^{\frac{n-2}{2}}\left(\pm 1\right)\neq 0$ and that $C_m^{\frac{n-2}{2}}(t)$ solves 

$$(t^2 -1)w'' + 2t (n-1) w' - m(m+n-2) w = 0,$$

it follows by the Cauchy uniqueness theorem that $C_m^{\frac{n-2}{2}}(t)$ and $\frac{dC_m^{\frac{n-2}{2}}}{dt} (t)$ cannot vanish simultaneously for any $t$, $|t|\leq1$, which concludes the proof.

\end{proof}



\section{Proof of the main result}\label{proofs main result}

Since the boundary $\partial\Omega$ is Lipschitz, the normal unit vector field might not be defined on $\partial\Omega$. We shall therefore introduce a unitary vector field $\widetilde\nu$ locally defined near $\partial\Omega$ such that: (i) $\widetilde\nu$ is $C^{\infty}$ smooth, (ii) $\widetilde\nu$ is non-tangential to $\partial\Omega$ and it points to the exterior of $\Omega$
(see \cite[Lemmas 3.1-3.3]{AG} for a precise construction of $\widetilde\nu$). Here we simply recall that any point $z_{\tau}=x^{0}+\tau\widetilde\nu$, where $x^{0}\in\partial\Omega$, satisfies

\begin{equation}\label{def tau0}
C\:\tau\leq{d}(z_{\tau},\:\partial\Omega)\leq\tau,
\quad\textnormal{for}\:\textnormal{any}\quad\tau,\quad
0\leq\tau\leq\tau_{0},
\end{equation}

where $\tau_{0}$ and $C$ depend on $L$, $r_0$ only.\\

Several constants depending on the \textit{a-priori data} introduced in Definition \ref{a priori data} will appear in the proof of the main result below. In order to simplify our notation, we shall denote by $C$ any of these constants, avoiding in most cases to point out their specific dependence on the \textit{a-priori data} which may vary from case to case.\\

To simplify our notation in what follows, we will denote $K_{\mu_{a_i}}$, $K^{-1}_{\mu_{a_i}}$ and $\Lambda_{\mu_{a_i}}$ simply by $K_i$, $K_i^{-1}$ and $\Lambda_i$ respectively, for $i=1,2$. We will also denote the operator norm $\vert\vert \Lambda_1 - \Lambda_2 \vert\vert_{L(H^{1/2}(\partial \Omega), H^{-1/2}(\partial \Omega))}$ simply by $ \vert\vert \Lambda_1 - \Lambda_2 \vert \vert_*$.

\begin{proof}[Proof of Theorem \ref{main result}]

We start by recalling that by Alessandrini's Identity, \eqref{Alessandrini identity}

\begin{eqnarray*}\label{Alessandrini identity II}
\langle (\Lambda_1 - \Lambda_2 )u_1,\overline{u}_2\rangle &=& \int_{\Omega} \left(K_1 (x) - K_2 (x)\right)\nabla u_1(x)\cdot\nabla u_2(x)\:dx\nonumber\\
&+&\int_{\Omega}\left(\mu_{a_1}(x)-\mu_{a_2}(x)\right)u_1(x)u_2(x)\:dx,
\end{eqnarray*}

for any $u_1,u_2 \in H^1 (\Omega)$ that solve

\begin{eqnarray}
& & \mbox{div}\big(K_1 \nabla u_1\big) + (\mu_{a_1} -ik)u_1 = 0,\qquad\textnormal{in}\quad\Omega ,\label{eq1}\\
& & \mbox{div}\big(K_2 \nabla u_2\big) + (\mu_{a_2} -ik)u_2 = 0,\qquad\textnormal{in}\quad\Omega .\label{eq2}
\end{eqnarray}

We set $x^{0}\in\partial\Omega$ such that

\begin{equation*}
(-1)^{h}\frac{\partial^h}{\partial\tilde\nu^h}(\mu_{a_1}-\mu_{a_2})(x^{0})\:=\:\left\|\frac{\partial^h}{\partial\tilde\nu^h}\left(\mu_{a_1}-\mu_{a_2}\right)\right\|_{L^{\infty}(\partial\Omega)}\
\end{equation*}

and $z_{\tau}=x^{0}+\tau\widetilde\nu$, with $0<\tau\leq\tau_{0}$, where $\tau_{0}$ is the number fixed in \eqref{def tau0}. Let $m>0$ an integer and $u_1,\:u_2\in W^{2,p}(\Omega)$ be the singular solutions of Theorem \ref{theor singular sol} to \eqref{eq1}, \eqref{eq2} respectively, having a singularity at $z_{\tau}$ 

\begin{eqnarray}\label{singular ui}
u_1(x) & =& C_1\Big(K^{-1}_1(z_{\tau})(x-z_{\tau})\cdot(x-z_{\tau})\Big)^{\frac{2-n-m}{2}}\:C_{m,1}^{\frac{n-2}{2}}(x,z_{\tau})+ O\left(\left|x-z_{\tau}\right|^{2-n-m+\alpha}\right),\nonumber\\
u_2(x) &=& C_2 \Big(K^{-1}_2(z_{\tau})(x-z_{\tau})\cdot(x-z_{\tau})\Big)^{\frac{2-n-m}{2}}\:C_{m,2}^{\frac{n-2}{2}}(x,z_{\tau}) + O\left(\left|x-z_{\tau}\right|^{2-n-m+\alpha}\right),
\end{eqnarray}

where $C_i = m! \left[(K_i^{-1}(z_{\tau}))_{nn}\right]^{\frac{m}{2}}$, for $i=1,2$ and

\begin{equation}\label{Cm}
C_{m,i}^{\frac{n-2}{2}}(x,z_{\tau}) := C_m^{\frac{n-2}{2}}\left(\frac{(K_{i}^{-1})_{(n)}(z_{\tau})}{\left((K_{i}^{-1}(z_{\tau}))_{nn}K_i^{-1}(z_{\tau})(x-z_{\tau})\cdot(x-z_{\tau})\right)^{\frac{1}{2}}}\right),\qquad\textnormal{for}\quad i = 1,2.
\end{equation}

To prove \eqref{stability derivatives}, we will prove

\begin{equation}\label{InductProof}
\Bigg\vert \Bigg\vert \frac{\partial^j}{\partial \tilde{\nu}^j} (\mu_{a_1} - \mu_{a_2}) \Bigg\vert \Bigg\vert_{L^{\infty}(\partial \Omega)} \leq C \vert \vert \Lambda_{\mu_{a_1}} - \Lambda_{\mu_{a_2}} \vert \vert_*^{\delta_j}, \qquad\textnormal{for\:any}\:j,\quad j \leq h,
\end{equation}

by induction on $j$, where $\delta_j = \prod_{i=0}^{j} \frac{\alpha}{\alpha + i}$. \eqref{stability derivatives} will follow by \eqref{InductProof} combined with an iterative use of the interpolation inequality

\begin{equation}\label{interpolation inequality}
||Df||_{L^{\infty}(\partial\Omega)}\leq C(\Omega)\left\{ \bigg|\bigg| \frac{\partial f}{\partial\tilde\nu}\bigg|\bigg|_{L^{\infty}(\partial\Omega)} + ||f||^{\frac{\alpha}{1+\alpha}}_{L^{\infty}(\partial\Omega)}\: + ||f||^{\frac{\alpha}{1+\alpha}}_{C^{1+\alpha}(\overline\Omega)} \right\},
\end{equation}

which holds true for any $f\in C^{1+\alpha}(\overline\Omega)$, $0<\alpha\leq 1$ (see, for instance, (3.3) in \cite[Lemma 3.2]{A1}). 

For $j=0$, \eqref{InductProof} is given by \eqref{stabilita' anisotropa}.  Assuming \eqref{InductProof} holds true for any $j$, $j\leq h-1$, we will prove it holds true for $j=h$ too. By setting $\rho=2\tau_{0}$ we have $\Omega \cap B_{\rho}(z_{\tau}) \neq\emptyset$ and from \eqref{Alessandrini identity II} and we obtain


\begin{eqnarray}\label{AlesIdnI}
\vert\vert\Lambda_1 - \Lambda_2 \vert\vert_{*} \||u_1||_{H^{\frac{1}{2}}(\partial\Omega)}\:||\overline{u}_2||_{H^{\frac{1}{2}}(\partial\Omega)} & \geq & \Bigg\vert \int_{\Omega \cap B_{\rho}(z_{\tau})} (K_1(x) - K_2(x)) \nabla u_1(x) \cdot \nabla u_2(x) \: dx \Bigg\vert\nonumber\\
& - & \int_{\Omega \backslash B_{\rho}(z_{\tau})} \vert K_1(x) - K_2(x) \vert \vert\nabla u_1(x) \vert \vert \nabla u_2(x) \vert \: dx \nonumber\\
& - & \int_{\Omega \cap B_{\rho}(z_{\tau})} \vert (\mu_{a_1} - \mu_{a_2})(x) \vert \vert u_1(x)\vert  \vert u_2(x) \vert  \: dx \nonumber\\
& - & \int_{\Omega \backslash B_{\rho}(z_{\tau})} \vert (\mu_{a_1} - \mu_{a_2})(x) \vert \vert u_1(x) \vert \vert u_2(x) \vert \: dx.
\end{eqnarray}

Recalling that $K_i(x) := K(x, \mu_{a_i}) = \frac{1}{n}\Big((\mu_{a_i}-ik)I+(I-B)\mu_s\Big)^{-1}$, where it is understood that $B=B(x)$, $\mu_s=\mu_s(x)$ and $\mu_{a_i}= \mu_{a_i}(x)$ for $i=1,2$, then by Lagrange Theorem we have for any $x\in\Omega\cap B_{\rho}(z_{\tau})$
\begin{equation}\label{Lagrange}
K_1(x) - K_2(x) = \frac{\partial K(x, t)}{\partial t}\Bigg\vert_{t=c(x)} (\mu_{a_1}(x) - \mu_{a_2}(x))
\end{equation}
with $\mu_{a_1}(x) < c(x) < \mu_{a_2}(x)$ and 
\begin{equation}\label{K(x,t)}
K(x, t) = \frac{1}{n}\Big((t-ik)I+(I-B)\mu_s\Big)^{-1}.
\end{equation}

Observing that $\frac{\partial K (x, t)}{\partial t} = -nK^2(x,t)$, and that the integrals over $\Omega\setminus B_{\rho}(z_{\tau})$ on the right hand side of \eqref{AlesIdnI} are bounded by a constant $C>0$ that depends on the \textit{a-priori} data, we obtain from \eqref{AlesIdnI}

\begin{eqnarray}\label{AlesIdjI}
\vert\vert\Lambda_1 - \Lambda_2 \vert\vert_{*} \vert\vert u_1 \vert\vert_{H^{\frac{1}{2}}(\partial\Omega)} \vert \vert u_2\vert \vert_{H^{\frac{1}{2}}(\partial\Omega)} &\geq &\Bigg\vert \int_{\Omega \cap B_{\rho}(z_{\tau})} (\mu_{a_1} - \mu_{a_2})(x)nK^2(x, c(x)) \nabla u_1(x) \cdot \nabla u_2(x) \: dx \Bigg\vert \nonumber\\
& - & C -  \int_{\Omega \cap B_{\rho}(z_{\tau})} \vert (\mu_{a_1} - \mu_{a_2})(x) \vert \vert u_1(x)\vert  \vert u_2(x) \vert \: dx.
\end{eqnarray}

Next, to estimate from below the first integral on the right hand side of \eqref{AlesIdjI}, we show that

\begin{equation}\label{KGradUEst}
 \vert K^2(x, c(x)) \nabla u_1(x) \cdot \nabla u_2(x) \vert  \geq  C \vert x - z_{\tau} \vert^{2-2(n+m)},\qquad\textnormal{for\:any}\:x,\quad x\in\Omega\cap B_{\rho}(z_{\tau}).
\end{equation}

To prove \eqref{KGradUEst} we observe that \eqref{singular ui}, \eqref{w stima lipschitz} lead to

\begin{eqnarray}\label{GradUMinusGradURes}
\vert \nabla u_1 - \nabla u_2 \vert  &\leq &C\Big\{ \vert x - z_{\tau} \vert^{1-n-m}\big| \mu_{a_1}(z_{\tau}) - \mu_{a_2}(z_{\tau})\big| +  \vert x - z_{\tau} \vert^{1-n-m+\alpha}\Big\}\nonumber\\
&\leq &C\Big\{ \vert x - z_{\tau} \vert^{1-n-m}\big| \mu_{a_1}(x^0) - \mu_{a_2}(x^0)\big| +   \vert x - z_{\tau} \vert^{1-n-m}\tau^{\beta} + \vert x - z_{\tau} \vert^{1-n-m+\alpha}\Big\}\nonumber\\
\end{eqnarray}

and recalling that $\vert x - z_{\tau} \vert > C\tau$, for any $x\in\Omega\cap B_{\rho}(z_{\tau})$ and $\alpha < \beta$,

\begin{equation}\label{GradUMinusGradUResult}
\vert \nabla u_1 - \nabla u_2 \vert \leq C \Big\{\vert x - z_{\tau} \vert^{1-n-m}\vert \mu_{a_1}(x_0) - \mu_{a_2}(x_0) \vert + \vert x - z_{\tau} \vert^{1-n-m+\alpha}\Big\}.
\end{equation}

Hence, for almost every $x \in B_{\rho}(z_{\tau}) \cap \Omega$, we have

\begin{eqnarray}\label{inequality 1}
|K^2(x, c(x)) \nabla u_1 \cdot \nabla u_2| & \geq & |K^2(x, c(x)) \nabla u_1 \cdot \nabla u_1| - |K^2(x, c(x))|\:|\nabla u_1|\: |\nabla u_2  - \nabla u_1| \nonumber \\
&\geq& C \vert  x - z_{\tau} \vert^{2-2(n+m)}\nonumber\\
&-& C \vert x - z_{\tau} \vert^{1-n-m} \bigg\{ \vert x - z_{\tau} \vert^{1-n-m}\vert \mu_{a_1}(x_0) - \mu_{a_2}(x_0) \vert + C \vert x - z_{\tau} \vert^{1-n-m+\alpha} \bigg\} \nonumber \\
&=& C \vert  x - z_{\tau} \vert^{2-2(n+m)}\bigg\{ 1 - \vert \mu_{a_1}(x_0) - \mu_{a_2}(x_0) \vert - \vert x - z_{\tau} \vert^{\alpha} \bigg\}.
\end{eqnarray}

Recalling that  \eqref{InductProof} for $j=0$ leads to

\begin{equation}
\vert \mu_{a_1}(x_0) - \mu_{a_2}(x_0) \vert \leq C \vert\vert \Lambda_{\mu_{a_1}} - \Lambda_{\mu_{a_2}} \vert\vert_*  
\end{equation}

and that without loss of generality we can assume that 

\begin{equation}\label{small measurements}
\vert\vert \Lambda_{\mu_{a_1}} - \Lambda_{\mu_{a_2}} \vert\vert_* \leq \frac{1}{2C^2},
\end{equation}

we obtain from \eqref{inequality 1} that

\begin{equation}
|K^2(x, c(x)) \nabla u_1 \cdot \nabla u_2| \geq C \vert x - z_{\tau} \vert^{2-2(n+m)}\bigg\{\frac{1}{2} - \vert x - z_{\tau} \vert^{\alpha} \bigg\}
\end{equation}

for almost every $x \in B_{\rho}(z_{\tau}) \cap \Omega$ and, by possibly reducing $\rho$ so that $\vert x - z_{\tau} \vert < \frac{1}{4}$, \eqref{KGradUEst} follows. Hence, without loss of generality we can assume that

\begin{equation}\label{estimate real part}
\Re \{K^2(x, c) \nabla u_1 \cdot \nabla u_2 \} \geq   C \vert x - z_{\tau} \vert^{2-2(n+m)}.
\end{equation}

for almost every $x \in B_{\rho}(z_{\tau}) \cap \Omega$. Hence \eqref{estimate real part}, combined together with \eqref{AlesIdjI}, leads to

\begin{eqnarray}\label{AlesIdjII}
\vert\vert\Lambda_1 - \Lambda_2 \vert\vert_{*} \vert\vert u_1 \vert\vert_{H^{\frac{1}{2}}(\partial\Omega)} \vert \vert u_2\vert \vert_{H^{\frac{1}{2}}(\partial\Omega)} &\geq &\Re \Bigg\{ \int_{\Omega \cap B_{\rho}(z_{\tau})} (\mu_{a_1} - \mu_{a_2})(x)nK^2(x, c(x)) \nabla u_1 \cdot \nabla u_2 \: dx \Bigg\}\nonumber \\
& - & C -  \int_{\Omega \cap B_{\rho}(z_{\tau})} \vert (\mu_{a_1} - \mu_{a_2})(x) \vert \vert u_1(x)\vert  \vert u_2(x) \vert \: dx,
\end{eqnarray}

therefore

\begin{eqnarray}\label{Ireland}
\vert\vert\Lambda_1 - \Lambda_2 \vert\vert_{*} \vert\vert u_1 \vert\vert_{H^{\frac{1}{2}}(\partial\Omega)} \vert\vert u_2\vert\vert_{H^{\frac{1}{2}}(\partial\Omega)} & \geq & C \int_{\Omega \cap B_{\rho}(z_{\tau})} ( \mu_{a_1} - \mu_{a_2} )(x) \vert x - z_{\tau} \vert^{2-2(n+m)} \: dx \nonumber \\
& - & C - \int_{\Omega \cap B_{\rho}(z_{\tau})}\!\!\!\! \vert (\mu_{a_1} - \mu_{a_2})(x) \vert \vert u_1(x)\vert  \vert u_2(x) \vert \: dx,
\end{eqnarray}

where, without loss of generality we assumed that $(\mu_{a_1} - \mu_{a_2})(x)>0$, for almost every $x \in B_{\rho}(z_{\tau}) \cap \Omega$. Noticing that any $x \in \Omega \cap B_{\rho}(z_{\tau})$ can uniquely be written as $x = y - s \tilde{\nu}$, with  $y \in \partial \Omega$, then for any $x \in B_{\rho}(z_{\tau}) \cap \Omega$ and by using Taylor's theorem, we have 

\begin{equation}\label{mu1minusmu2TaylorEstResult}
 \sum_{j=0}^{h-1} \frac{\partial^j}{\partial \tilde{\nu}^j}(\mu_{a_1} - \mu_{a_2})(y) \frac{(-s)^j}{j!} + \frac{\partial^h}{\partial \tilde{\nu}^h}(\mu_{a_1} - \mu_{a_2})(x_0) \frac{(-s)^h}{h!} - (\mu_{a_1} - \mu_{a_2})(x) \leq C \vert x - x_0 \vert^{\alpha} s^h,
\end{equation}

for any $x \in B_{\rho}(z_{\tau}) \cap \Omega$ and

\begin{eqnarray}\label{mu1minusmu2TaylorEstResult2}  
(\mu_{a_1} - \mu_{a_2})(x) &\geq &  \Bigg\vert\Bigg\vert  \frac{\partial^h}{\partial \tilde{\nu}^h}(\mu_{a_1} - \mu_{a_2})   \Bigg\vert\Bigg\vert_{L^{\infty}(\partial \Omega)}s^h \nonumber \\
 &- & \sum_{j=0}^{h-1}\Bigg\vert \Bigg\vert \frac{\partial^j}{\partial \tilde{\nu}^j}(\mu_{a_1} - \mu_{a_2})\Bigg\vert \Bigg\vert_{L^{\infty}(\partial \Omega)}s^j \nonumber \\
 &- &C \vert x - x_0 \vert^{\alpha}s^h, 
\end{eqnarray}

for any $x \in B_{\rho}(z_{\tau}) \cap \Omega$. Observing that we also have

\begin{equation}\label{mu1minusmu2TaylorEstResult3}  
(\mu_{a_1} - \mu_{a_2})(x) \leq \sum_{j=0}^{h-1} \Bigg\vert \Bigg\vert \frac{\partial^j}{\partial \tilde{\nu}^j}(\mu_{a_1} - \mu_{a_2}) \Bigg\vert\Bigg\vert_{L^{\infty}(\partial \Omega)}s^j + Cs^h,
\end{equation}

for any $x \in B_{\rho}(z_{\tau}) \cap \Omega$ and combing together \eqref{mu1minusmu2TaylorEstResult2}, \eqref{mu1minusmu2TaylorEstResult3} and \eqref{Ireland}, leads to

\begin{eqnarray}\label{StabilityFinResI}
& &\vert\vert\Lambda_1 - \Lambda_2 \vert\vert_{*} \vert\vert u_1 \vert\vert_{H^{\frac{1}{2}}(\partial\Omega)} \vert \vert u_2 \vert \vert_{H^{\frac{1}{2}}(\partial\Omega)}\nonumber \\
& & \geq C\Bigg\{ \Bigg\vert\Bigg\vert  \frac{\partial^h}{\partial \tilde{\nu}^h}(\mu_{a_1} - \mu_{a_2})   \Bigg\vert\Bigg\vert_{L^{\infty}(\partial \Omega)} \int_{\Omega \cap B_{\rho}(z_{\tau})} \vert x - z_{\tau}\vert^{2-2(n+m)} d(x, \partial \Omega)^h dx \nonumber \\
& & - \sum_{j=0}^{h-1} \Bigg\vert \Bigg\vert \frac{\partial^j}{\partial \tilde{\nu}^j}(\mu_{a_1} - \mu_{a_2}) \Bigg\vert\Bigg\vert_{L^{\infty}(\partial \Omega)}\int_{\Omega \cap B_{\rho}(z_{\tau})} \vert x  - z_{\tau}\vert^{2-2(n+m)} d(x, \partial \Omega)^j dx \nonumber \\
& & - \int_{\Omega \cap B_{\rho}(z_{\tau})} \vert x - x_0 \vert^{\alpha}\vert x - z_{\tau}\vert^{2-2(n+m)} d(x, \partial \Omega)^h dx \nonumber \\
&& -  \sum_{j=0}^{h-1} \Bigg\vert \Bigg\vert \frac{\partial^j}{\partial \tilde{\nu}^j}(\mu_{a_1} - \mu_{a_2}) \Bigg\vert\Bigg\vert_{L^{\infty}(\partial \Omega)}\int_{\Omega \cap B_{\rho}(z_{\tau})} \vert x - z_{\tau}\vert^{4-2(n+m)} d(x, \partial \Omega)^j dx \nonumber \\
& & -  \int_{\Omega \cap B_{\rho}(z_{\tau})} \vert x - z_{\tau}\vert^{4-2(n+m)} d(x, \partial \Omega)^h dx\Bigg\} - C,
\end{eqnarray}

which leads to

\begin{eqnarray}\label{StabilityFinResII}
& & \Bigg\vert\Bigg\vert  \frac{\partial^h}{\partial \tilde{\nu}^h}(\mu_{a_1} - \mu_{a_2})   \Bigg\vert\Bigg\vert_{L^{\infty}(\partial \Omega)} \int_{\Omega \cap B_{\rho}(z_{\tau})} \vert x - z_{\tau}\vert^{2-2(n+m)} d(x, \partial \Omega)^h dx \leq  \nonumber \\
& & \leq C\Bigg\{ \sum_{j=0}^{h-1} \Bigg\vert \Bigg\vert \frac{\partial^j}{\partial \tilde{\nu}^j}(\mu_{a_1} - \mu_{a_2}) \Bigg\vert\Bigg\vert_{L^{\infty}(\partial \Omega)}\int_{\Omega \cap B_{\rho}(z_{\tau})} \vert x - z_{\tau}\vert^{2-2(n+m)} d(x, \partial \Omega)^j dx \nonumber \\
&+ & \int_{\Omega \cap B_{\rho}(z_{\tau})} \vert x - x_0 \vert^{\alpha}\vert x - z_{\tau}\vert^{2-2(n+m)} d(x, \partial \Omega)^h dx\nonumber \\
&+ &\int_{\Omega \cap B_{\rho}(z_{\tau})} \vert x - z_{\tau}\vert^{4-2(n+m)} d(x, \partial \Omega)^h dx \nonumber \\
&+ &\vert\vert\Lambda_1 - \Lambda_2 \vert\vert_{*} ~ \vert\vert u_1 \vert\vert_{H^{\frac{1}{2}}(\partial\Omega)} ~ \vert \vert u_2 \vert \vert_{H^{\frac{1}{2}}(\partial\Omega)}\Bigg\} + C.
\end{eqnarray}

The first integral on the right hand side of \eqref{StabilityFinResII} can be estimated from above by observing that $\Omega \cap B_{\rho}(z_{\tau}) \subset \{ C \tau \leq \vert x - z_{\tau} \vert \leq 2 \tau_0 \}$ and that $d(x, \partial \Omega) \leq \vert x - z_{\tau} \vert$, hence

\begin{eqnarray}\label{Italy0} 
\int_{\Omega \cap B_{\rho}(z_{\tau})} \vert x - z_{\tau} \vert^{2-2(n+m)} d(x, \partial \Omega)^j \:dx &\leq& \int_{C \tau \leq \vert x - z_{\tau} \vert \leq 2 \tau_0} \vert x - z_{\tau} \vert^{2-2(n+m)} \vert x - z_{\tau} \vert^{j} \:dx \nonumber \\
&=& \int_{C \tau}^{2\tau_0} \sigma^{2-2n-2m+j+n-1} \:d\sigma \int_{\vert \xi \vert = 1} d\Sigma_{\xi} \nonumber \\
&\leq & C \tau^{2-n-2m+j},
\end{eqnarray}

where $d\Sigma_{\xi}$ denotes the surface measure on the unit sphere. Similarly, the other integrals on the right hand side of \eqref{StabilityFinResII} can be estimated from above as

\begin{equation}\label{Italy1}
\int_{\Omega \cap B_{\rho}(z_{\tau})}  \vert x - z_{\tau} \vert^{2-2(n+m)} d(x, \partial \Omega)^h \:dx \leq C \tau^{2-n-2m+h} 
\end{equation}

\begin{equation}\label{Italy2}
\int_{\Omega \cap B_{\rho}(z_{\tau})}  \vert x - x_0 \vert^{\alpha} \vert x - z_{\tau} \vert^{2-2(n+m)} d(x, \partial \Omega)^h \:dx \leq C \tau^{2-n-2m+h +\alpha}, 
\end{equation}

where the integral on the left hand side of \eqref{StabilityFinResII} can be estimated from below (see \cite[p.66]{Sa})

\begin{equation}\label{Italy3}
\int_{\Omega \cap B_{\rho}(z_{\tau})} \vert x - z_{\tau} \vert^{2-2(n+m)} d(x, \partial \Omega)^h \:dx \geq C \tau^{2-n-2m+h}.
\end{equation}

Recalling that by the induction hypothesis we have

\begin{equation}\label{InducHyp}
 \Bigg\vert \Bigg\vert \frac{\partial^j}{\partial \tilde{\nu}^j}(\mu_{a_1} - \mu_{a_2}) \Bigg\vert\Bigg\vert_{L^{\infty}(\partial \Omega)} \leq C \vert\vert \Lambda_1 - \Lambda_2 \vert\vert_{*}^{\delta j},\qquad\textnormal{for\:any}\:j,\quad j\leq h-1.
\end{equation}

By combining \eqref{Italy0} - \eqref{InducHyp} and the $H^{\frac{1}{2}}(\partial\Omega)$-norms of $ u_1, u_2$ (see \cite{A1}, \cite{AG}) we get

\begin{eqnarray}\label{StabilityFinResXIII}
 \Bigg\vert\Bigg\vert  \frac{\partial^h}{\partial \tilde{\nu}^h}(\mu_{a_1} - \mu_{a_2})   \Bigg\vert\Bigg\vert_{L^{\infty}(\partial \Omega)} \tau^{2-n-2m+h}& \leq & C \sum_{j=0}^{h-1} \vert\vert \Lambda_1 - \Lambda_2 \vert\vert^{\delta_j}_* \tau^{2-n-2m+j} \nonumber \\
&+&C \tau^{2-n-2m+\alpha + h} + C + C\tau^{4-n-2m+h} \nonumber \\
&+&\vert\vert \Lambda_1 - \Lambda_2 \vert\vert_{*} \tau^{2-n-2m}. \nonumber \\
&\leq& C \vert\vert \Lambda_1 - \Lambda_2 \vert\vert_{*}^{\delta_{h-1}}  \tau^{2-n-2m} \nonumber \\
&+ &C \tau^{2-n-2m+\alpha +h} + C + C\tau^{4-n-2m+h}, 
\end{eqnarray}

hence

\begin{eqnarray}\label{NearEndResult}
 \Bigg\vert\Bigg\vert  \frac{\partial^h}{\partial \tilde{\nu}^h}(\mu_{a_1} - \mu_{a_2})   \Bigg\vert\Bigg\vert_{L^{\infty}(\partial \Omega)} 
&\leq &C\Big\{ \vert\vert \Lambda_1 - \Lambda_2 \vert\vert_{*}^{\delta_{k-1}}  \tau^{-h} \nonumber \\
 &+ & \tau^{\alpha} + \tau^{n+2m-2-h} + \tau^{2}\Big\}.
\end{eqnarray}

By choosing $m$ sufficiently large, \eqref{NearEndResult} becomes

\begin{equation}\label{second last estimate}
 \Bigg\vert\Bigg\vert  \frac{\partial^h}{\partial \tilde{\nu}^h}(\mu_{a_1} - \mu_{a_2})   \Bigg\vert\Bigg\vert_{L^{\infty}(\partial \Omega)} 
\leq C \Big\{ \vert\vert \Lambda_1 - \Lambda_2 \vert\vert_{*}^{\delta_{k-1}}  \tau^{-h} + \tau^{2}\Big\}.
\end{equation}

Finally, optimising \eqref{second last estimate} with respect to $\tau$ we obtain 

\begin{equation}\label{StabilityFinResult}
 \Bigg\vert\Bigg\vert  \frac{\partial^h}{\partial \tilde{\nu}^h}(\mu_{a_1}- \mu_{a_2})   \Bigg\vert\Bigg\vert_{L^{\infty}(\partial \Omega)} \leq C \vert \vert \Lambda_1 - \Lambda_2 \vert \vert_{*}^{\delta_h},
\end{equation}

which concludes the proof of \eqref{InductProof}.
\end{proof}


\begin{proof}[Proof of Corollary \ref{corollary}]

By an induction argument, for every multi-index $\beta$, $|\beta|\leq h$, we have

\begin{equation}\label{polynomial}
\frac{\partial^{|\beta|}}{\partial x^{\beta}} K (x, \mu_a(x)) = \sum_{|\gamma| + |\delta| \leq |\beta|} P_{\gamma\:\delta} (\mu_a(x),\dots , D^{|\delta|} \mu_a(x)) \: \frac{\partial^{|\gamma|}}{\partial x^{\gamma}}\: \frac{\partial^{|\delta|}}{\partial t} K (x,  t)\Big|_{t=\mu_a(x)}, 
\end{equation}

where $P_{\gamma\:\delta}$ is a polynomial. By hypotheses \eqref{assumption ms smooth}, \eqref{assumption B smooth}, we obtain that $K(x,\mu_{a_i}(x))\in C^{h,\alpha}(\overline\Omega_r)$, for $i=1,2$, hence 

\begin{equation}\label{estimate tensor mua}
||D^h \left(K(x,\mu_{a_1}) - K(x, \mu_{a_2})\right)||_{L^{\infty}(\partial\Omega)}\leq C ||\mu_{a_1} - \mu_{a_2}||^{\alpha}_{C^h(\overline\Omega_r)},
\end{equation}

which, combined with \eqref{stability derivatives}, implies \eqref{stability derivatives K}.

\end{proof}



\begin{thebibliography}{AB}




\bibitem{A1} G. Alessandrini, Singular solutions of elliptic equations and the determination of conductivity by boundary measurements, J.
Differential Equations \textbf{84}, (2) (1990), 252-272.



\bibitem{A-dH-F-G-S} G. Alessandrini, F. Faucher, M. V. de Hoop, R. Gaburro and E. Sincich, Inverse problem for the Helmholtz equation with Cauchy data: reconstruction with conditional well-posedness driven iterative regularization, ESAIM: Mathematical Modeliing and Numerical Analysis \textbf{53} (\textbf{3}) (2019), 1005-1030.


\bibitem{A-dH-G} G. Alessandrini, M.V. de Hoop and R. Gaburro, Uniqueness for the electrostatic inverse boundary value problem with piecewise constant anisotropic conductivities, Inverse Problems \textbf{33} (\textbf{12}) (2017), 125013.

\bibitem{A-dH-G-S} G. Alessandrini, M. V. de Hoop, R. Gaburro and E. Sincich, Lipschitz stability for the electrostatic inverse boundary value problem with piecewise linear conductivities, Journal de Math\`ematiques Pures et Appliqu\`ees \textbf{107} (\textbf{5}) (2017), 638 - 664.

\bibitem{A-dH-G-S1} G. Alessandrini, M. V. de Hoop, R. Gaburro and E. Sincich, Lipschitz stability for a piecewise linear Schr\"odinger potential from local Cauchy data, Asymptotic Analysis \textbf{108} (\textbf{3}) (2018), 115-149.

\bibitem{A-dH-G-S2} G. Alessandrini, M. V. de Hoop, R. Gaburro and E. Sincich, EIT in a layered anisotropic medium, Inverse Problems and Imaging \textbf{12} (\textbf{3}) (2018), 667 - 676.

\bibitem{AG} G. Alessandrini and R. Gaburro, Determining conductivity with special anisotropy by boundary measurements, SIAM J. MATH. ANAL. \textbf{33}  (2001), 153-171.

\bibitem{AG1} G. Alessandrini and R. Gaburro, The local Calder\'on problem and the determination at the boundary of the conductivity, Comm. Partial
Differential Equations \textbf{34}  (2009), 918-936.


\bibitem{A-V} G. Alessandrini and S. Vessella, Lipschitz stability for the inverse conductivity problem, Advances in Applied Mathematics \textbf{35} (2005), 207-241.

\bibitem{Ar} S. R. Arridge, Optical tomography in medical imaging, Inverse Problems \textbf{15} (\textbf{2}) (1999), R41.

\bibitem{HebdenArridge} S. R. Arridge and J. C. Hebden, Optical imaging in medicine II: modelling and reconstruction, Physics in Medicine and Biology \textbf{42} (\textbf{5}) (1997), 841.

\bibitem{ArL} S. R. Arridge, W.R.B Lionheart, Nonuniqueness in diffusion-based optical tomography, Optics Letters \textbf{23}  (\textbf{11}) (1998), 882-884.

\bibitem{ArSc} S. R. Arridge and J.C. Schotland, Optical tomography: forward and inverse problems, Inverse Problems \textbf{25} (\textbf{12}) (2009), 123010.




\bibitem{Be-dH-F-S} E. Beretta, M. De Hoop, F. Faucher and O. Scherzer, Inverse boundary value problem for the Helmholtz equation: quantitative conditional Lipschitz stability estimates, SIAM J. Math. Anal. \textbf{48} (2016), 3962-3983.

\bibitem{Be-Fr-V} E. Beretta, E. Francini and S. Vessella, Uniqueness and Lipschitz stability for the identification of Lam\'e parameters from boundary measurements, Inv. Probl. Imag. \textbf{8} (2014), 611-644.

\bibitem{Be-dH-Fr-V-Z} E. Beretta, M. V. de Hoop, E. Francini, S. Vessella and J. Zhai, Uniqueness and Lipschitz stability of an inverse boundary value problem for time-harmonic elastic waves, Inverse Problems \textbf{33} (\textbf{3}) (2017), 035013.

\bibitem{Be-dH-Q} E. Beretta, M. De Hoop and L. Qiu, Lipschitz stability of an inverse boundary value problem for a Schr\"{o}dinger type equation, SIAM J. Math. Anal. \textbf{45} (2013), 679-699.

\bibitem{B-dH-Q-S} E. Beretta E, M. V. de Hoop, L. Qiu L and O. Scherzer 2014 Inverse boundary value problem for the Helmholtz equation: multi-level approach and iterative reconstruction, arXiv:1406.2391 (2014).

\bibitem{Be-Fr} E. Beretta and E. Francini, Lipschitz stability for the electrical impedance tomography problem: the complex case, Communications in Partial Differential Equations \textbf{36} (2011), 1723-1749.

\bibitem{Be-Fr-Mo-Ro-Ve} E. Beretta, E. Francini, A. Morassi, E. Rosset and S. Vessella, Lipschitz continuous dependence of piecewise constant Lam\'e coefficients from boundary data: the case of non flat interfaces, Inverse Problems \textbf{30} (2014), 125005.

\bibitem{Boas} D.A. Boas, D.H. Brooks, E.L. Miller, C.A. DiMarzio, M. Kilmer, R.J. Gaudette and Q. Zhang, Imaging the body with diffuse optical tomography. IEEE signal processing magazine, \textbf{18} (2001), 57-75.

\bibitem{Bo} L. Borcea, Electrical impedance tomography, Inverse Problems \textbf{18} (2002), R99-R136.


\bibitem{C} A. P. Calder\'{o}n, On an inverse boundary value problem, Seminar on Numerical Analysis and its Applications to Continuum Physics (Rio de Janeiro, 1980),   65--73, Soc. Brasil. Mat., Rio de Janeiro, 1980. Reprinted in: Comput. Appl. Math. \textbf{25}  (\textbf{2-3}) (2006), 133-138.



\bibitem{dHQS} M. V. de Hoop, L. Qiu and O. Scherzer, Local analysis of inverse problems: H\"older stability and iterative regularization, Inverse Problems \textbf{23} (2012) 045001 (16pp).

\bibitem{DoGLN} O. Doeva, R. Gaburro, W.R.B. Lionheart and C. J. Nolan, Lipschitz stability at the boundary for time-harmonic diffuse optical tomography, Applicable Analysis (2020) doi.org/10.1080/00036811.2020.1758314.



\bibitem{F-A-Ba-dH-G-S} F. Faucher, G. Alessandrini, H. Barucq, M.V. de Hoop, R. Gaburro and E. Sincich, Full reciprocity-gap waveform inversion enabling sparce-source acquisition, Geophysics \textbf{85} 2020, R461-R476.
	
\bibitem{Fa-dH-S} F. Faucher, M.V. de Hoop and O. Scherzer, Reciprocity-gap misfit functional for distributed acoustic sensing, combining data from passive and active sources, Geophysics \textbf{86} (2021), doi.org/10.1190/geo2020-0305.1






\bibitem{FGS} S. Foschiatti, R. Gaburro and E. Sincich, Stability for the Calder\'{o}n's problem for a class of anisotropic conductivities via an ad-hoc misfit functional, Inverse Problems (2021), in press.





\bibitem{G} R. Gaburro, Stable determination at the boundary of the optical properties of a medium: the static case, Rend. Istit. Mat. Univ. Trieste \textbf{48} (2016), 407 - 431.

\bibitem{GL} R. Gaburro and W. R. B. Lionheart, Recovering Riemannian metrics in monotone families from boundary data, Inverse Problems \textbf{25} (\textbf{4}) (2009), 045004 (14pp).

\bibitem{G-S} R. Gaburro and E. Sincich, Lipschitz stability for the inverse conductivity problem for a conformal class of anisotropic conductivities, Inverse Problems \textbf{31} (2015), 015008 .

\bibitem{GiHA} A. P. Gibson, J. C. Hebden and S. R. Arridge, Recent advances in diffuse optical imaging, Phys. Med. Biol. \textbf{50} (2005), R1 - R43.



\bibitem{Ha} B. Harrach, On uniqueness in diffuse optical tomography, Inverse Problems \textbf{25} (\textbf{5}) (2009), 055010.

\bibitem{Ha2} B. Harrach,  Simultaneous determination of the diffusion and absorption coefficient from boundary data, Inverse Problems and Imaging \textbf{6} (\textbf{4}) (2012), 663 - 679.

\bibitem{HS} J. Heino and E. Somersalo, Estimation of optical absorption in anisotropic background, Inverse Problems \textbf{18} (\textbf{3}) (2002), 559.

\bibitem{HAS} J. Heino, S. Arridge, J. Sikora, and E. Somersalo, Anisotropic effects in highly scattering media, Physical Review E \textbf{68} (\textbf{3}) (2003), 031908.



\bibitem{H} N. Hyv\"onen, Characterizing inclusions in optical tomography, Inverse Problems \textbf{20} (\textbf{3}) (2004), 737.

\bibitem{KS} J. Kaipio and E. Somersalo, Statistical and computational inverse problems.Vol. 160 (2006). Springer Science \& Business Media.

\bibitem{KVKaAr} V. Kolehmainen, M. Vauhkonen, J. P. Kaipio and S. R. Arridge, Recovery of piecewise constant coefficients in optical diffusion tomography, Optic Express \textbf{7} (13) (2000), 468 - 480.




\bibitem{Koh-V1} R. Kohn and M. Vogelius,
	Determining conductivity by boundary measurements, Comm. Pure Appl. Math. \textbf{37}, (1984), 289-298.
	

	\bibitem{Koh-V2} R. Kohn and M. Vogelius,
	Determining Conductivity by Boundary Measurements II. Interior
	Results, Comm. Pure. Appl. Math. \textbf{38} (1985), 643-667.
	








\bibitem{I1} V. Isakov, On the uniqueness in the inverse conductivity problem with local data, Inverse Problems and Imaging \textbf{1} (\textbf{1})  (2007), 95 - 105.














\bibitem{N} A. Nachman, Global uniqueness for a two dimensional inverse boundary value problem, Ann. Math. \textbf{142} (1995), 71-96.






\bibitem{Sa} M. Salo, Inverse problems for nonsmooth first order perturbations of the Laplacian, PhD Thesis, University of Helsinki, Helsinki (2004).



\bibitem{Sy-U} J. Sylvester and G. Uhlmann, A Global Uniqueness Theorem for an Inverse Boundary Valued
Problem, Ann. of Math., Vol.\textbf{125} (1987), 153-169.



\bibitem{U} G. Uhlmann, Electrical impedance tomography and Calder\'on's problem, Inverse Problems \textbf{25}  (\textbf{12}) (2009), 123011.

\end{thebibliography}
\end{document}